\documentclass[12pt]{article}
\usepackage{amsmath,amssymb,amsthm,mathtools,mathrsfs,latexsym}
\usepackage{graphicx}
\usepackage{xcolor}
\usepackage{tikz}
\usepackage[linkcolor=blue,colorlinks=true,urlcolor=red,bookmarksopen=true]{hyperref}
\usepackage{newclude}
\usepackage{imakeidx}
\makeindex
\usepackage{indentfirst}
\usepackage{bm}

\allowdisplaybreaks

\title{Global regularity of the multi-dimensional compressible Navier-Stokes-Korteweg system with large initial data}
\date{}
\author{
	\bf\large Xiangdi Huang$^{a}$\thanks{E-mail addresses: xdhuang@amss.ac.cn (X. Huang); mengweili@amss.ac.cn (W. Meng); xyzhang05@163.com (X. Zhang). }, Weili Meng$^a$, Xueyao Zhang$^b$\\
	\small a. State Key Laboratory of Mathematical Sciences, Academy of Mathematics and Systems Science,\\
	\small Chinese Academy of Sciences, Beijing 100190, China;\\
	\small b. School of Mathematics and Statistics, Yulin University, Yulin 719000, China;\\
}

\newcommand{\divg}{{\rm  div}}
\newcommand{\norm}[1]{\left\Vert#1\right\Vert}
\let\div\relax
\DeclareMathOperator*{\div}{div}

\usepackage{hyperref}
\hypersetup{hypertex=true,
	colorlinks=true,
	linkcolor=blue,
	anchorcolor=blue,
	citecolor=blue}

\usepackage{color}
\newtheorem{thm}{Theorem}[section]

\newtheorem{lema}{Lemma}[section]
\newtheorem{prop}{Proposition}[section]

\newtheorem{rmk}{Remark}[section]

\allowdisplaybreaks

\begin{document}
	\maketitle %显示标题
	\begin{abstract}
In this paper, we establish the global existence of strong solutions to the multi-dimensional compressible Navier–Stokes–Korteweg system with arbitrarily large initial data on the torus $\mathbb{T}^N$, provided the adiabatic exponent satisfies $\gamma \in [1, \infty)$ when $N = 2$ or $\gamma \in [1, \frac{7}{3})$ when $N = 3$. This system was derived by Dunn and Serrin [Arch. Ration. Mech. Anal. 88(2):95–133, 1985] and is widely used to model capillarity in compressible fluids. Via an original modified Nash-Moser type iteration, we establish a critical estimate linking the effective velocity $u+\nabla\log \varrho$ and $\varrho^{-1}$, namely
$$\norm{u+\nabla\log\varrho}_{L^\infty(\mathbb{T}^N\times(0,T))}\le C\sqrt{\log(e+\norm{\varrho^{-1}}_{L^\infty(\mathbb{T}^N\times(0,T))})},$$ which plays a crucial role in deriving the positive lower bound of the density. To our knowledge, this can be viewed as the  first existence result of global strong solutions for the compressible fluid dynamics equations with physical significance in general three-dimensional domains with arbitrarily large initial data. \\[4mm]
{\bf Keywords:} compressible Navier-Stokes-Korteweg system; quantum Navier-Stokes system; density-dependent viscosity; global strong solutions; large initial data.\\[4mm]
{\bf Mathematics Subject Classifications (2020):} 35D35; 35Q30; 35Q35; 35Q40; 76N10.\\[4mm]
	\end{abstract}

	\section{Introduction}
    In order to provide a continuum mechanical model for capillarity within fluids, in 1901, Korteweg \cite{Korteweg-1901} formulated a constitutive equation for the Cauchy stress tensor which includes terms depending on density gradients. Dunn and Serrin \cite{DuSe-1985} further modified the system of compressible fluids based on the Korteweg theory of capillarity. The compressible Navier–Stokes–Korteweg (NSK) system with density-dependent viscosity coefficients takes the form:
	\begin{equation}\label{Equ1}
		\begin{cases}
			\varrho_t + \operatorname{div}(\varrho u) = 0,\\
		(\varrho u)_t + \operatorname{div}(\varrho u\otimes u)+ \nabla P(\varrho ) - \operatorname{div}\big(2\mu(\varrho) \mathbb{D}u \big) - \nabla \big(\lambda(\varrho) \operatorname{div} u\big)
			= \divg\mathbb{K},
		\end{cases}
	\end{equation}
	where $\varrho(x,t)$, $u(x,t)$ and $P(\varrho)=\varrho^\gamma$ are the density,
	velocity, and pressure of the fluid, respectively, and $\mathbb{D}u = \frac{{\nabla u + {\nabla u}^T}}{2}$. The viscosities $\mu(\varrho)$ and $\lambda(\varrho)$ satisfy
	$$\mu(\varrho)\geq0,\ \mu(\varrho)+N\lambda(\varrho)\geq0,$$
    where $N$ denotes the spatial dimension. The capillary term is given by
		\begin{align*}
		\mathbb{K}=\Big(\varrho\divg(\kappa(\varrho)\nabla\varrho)-\frac{\varrho\kappa'(\varrho)-\kappa(\varrho)}{2}|\nabla\varrho|^2\Big)\mathbb{I}
        -\kappa(\varrho)\nabla\varrho\otimes\nabla\varrho,
		\end{align*}
        where $\mathbb{I}$ is the identity matrix. Note that when $\kappa(\varrho)=0$, the system 
(\ref{Equ1}) is reduced to the compressible Navier–Stokes system.

In this paper, we require the coefficients in the system \eqref{Equ1} to satisfy
		\begin{align}\label{vis coeff}
			\mu(\varrho)=\nu_0\varrho,\quad\lambda(\varrho)=0,\quad\kappa(\varrho)=\frac{\kappa_0^2}{\varrho},
		\end{align}
where the positive constants $\nu_0$ and $\kappa_0$ satisfy
\begin{align}\label{vis'}
    \nu_0=\kappa_0>0.
\end{align}
Without loss of generality, we always assume that \(\nu_0 = \kappa_0 = 1\).

When the viscosity and capillarity coefficients in system \eqref{Equ1} satisfy condition \eqref{vis coeff}, the compressible Navier-Stokes-Korteweg system reduces to the quantum Navier-Stokes system. For a more detailed derivation of the quantum Navier–Stokes system, we refer to \cite{BrMe-2010,Jun-2012}. Similar systems also play a central role in quantum fluid dynamics. A prominent example is the inviscid case ($\nu_0=0$), which reduces to the well-known quantum hydrodynamics model for superfluids \cite{LaLi-1977}. These models are employed in the study of quantum semiconductors \cite{FeZh-1993}, weakly interacting Bose gases \cite{Gr-1973}, and Bohmian trajectories \cite{Wy-2005}. In recent decades, these models have attracted considerable attention, driven in part by advances in nanotechnology. For the one-dimensional quantum Navier--Stokes system, J\"{u}ngel \cite{Jun-2011} established the global existence of smooth solutions with strictly positive particle densities on $\mathbb{R}$ for arbitrarily large initial data, under the assumption $\nu_0 = \kappa_0$. Subsequently, Chen and Zhao \cite{ChenZhao-2025} extended this result to the case $\nu_0 \geq \kappa_0$. For the high-dimensional quantum Navier--Stokes system, Jüngel \cite{Jun-2010} obtained the global weak solutions to the system with $\nu_0<\kappa_0$ on the torus for large initial data, by employing the special test function $\rho \phi$
with $\phi$ smooth and compactly supported. By the construction of a regular approximating system, Antonel and Spirito \cite{AnSp-2017,AnSp-2018}  proved the global existence of finite energy
weak solutions on the two- and three-dimensional torus for large initial data with $\nu_0>\kappa_0$ and other technical restrictions imposed on $\nu_0$ and $\kappa_0$, where the vacuum region is included in the weak formulations. Recently, Lacroix-Violet and Vasseur \cite{LaVa-2018} proved the existence of the renormalized
solution which implies that the above technical restriction in \cite{AnSp-2017} can indeed be removed, and the semi-classical limit
of the solution is studied. The global existence of finite energy weak solutions in the
whole space with non trivial far-field condition is proved by Antonelli, Hientzsch and Spirito \cite{AnHiSp-2021}. Some results about global weak solution of the quantum Navier-Stokes equations with damping term are considered in Vasseur and Yu \cite{VaYu-2016} and Lü, Zhang and Zhong \cite{LvZh-2019} and the references therein.
Although the study of weak solutions to the high‑dimensional quantum Navier–Stokes system is well developed, the existence of global strong solutions for large initial data in higher dimensions remains open. This is precisely the objective of the present paper.

In what follows, we shall focus on some closely related works about the well-posedness of the broader Navier-Stokes-Korteweg system.  The investigation of Navier-Stokes equations incorporating a capillarity term has garnered significant
attention in recent research. Serrin \cite{Serrin-1983} reconsidered the Korteweg theory and showed the existence of steady profile
connecting two different phases. Slemrod \cite{Slemrod-1983} considered the existence of
travelling wave solutions connecting two different phases. In two or three space dimensions, for initial data away from vacuum, Hattori and Li \cite{HaLi-1994} proved the local existence of strong solution of the Cauchy problem for the compressible fluid models of Korteweg type in Sobolev space. Later, Kotschote \cite{Kots-2008} considered the initial-boundary-value problem and proved the local existence of the strong solution. In subsequent research, there have been many mathematical results on the global existence of compressible fluid models of Korteweg type. 

In studies of the one-dimensional space case, Tsyganov \cite{Tsy-2008} proved the global existence and asymptotic behaviors of weak solutions in bounded domain with large data away from vacuum. For the system (\ref{Equ1}) with density-dependent viscosity and capillary coefficients, Germain and LeFloch \cite{GerLef-2016} established the global existence of finite energy weak solutions of the Cauchy problem as well as their convergence toward the entropy solutions to the Euler system. Burtea and Haspot \cite{BurHas-2022} studied the capillarity vanishing limit scenarios. Specifically, when the viscosity $\mu(\varrho)=\varrho^\alpha$ and the capillarity coefficient $\kappa(\varrho)=\varrho^\beta$ satisfy some given conditions, Antonelli, Bresch and Spirito \cite{AnBrSp-arxiv} obtained the global weak solutions of periodic problem with large data.
For existence results concerning global strong solutions, we refer to \cite{ChenCai-2015,Chen-2016,ChenLi-2021,BurHas-20222,Dong-2026}.

However, relatively few results are available for high-dimensional problems. Hattori and Li \cite{HaLi-1996} extended the local results obtained in \cite{HaLi-1994}
 and established the global existence of solutions for the Korteweg system with the small and non-vacuum initial data of the Cauchy problem, where the viscosity and capillary coefficients are constant.  When the viscosity and capillarity coefficients depend on the density, in a functional setting as close as possible to the physical energy spaces, Danchin and Desjardins \cite{DaDe-2001} proved the global existence of suitably smooth solutions which are close to a stable non-vacuum equilibrium. Bresch, Desjardins, and Lin \cite{BDL-2003} proved both the $L^1$ stability and the global existence of weak solutions for Korteweg-type fluids in a periodic box or strip domain, allowing for vacuum. In their analysis, a novel a priori entropy estimate was established, yielding higher regularity for the density. Bresch and Desjardins \cite{BD-2003} considered the two dimensional viscous shallow water model with a capillary term allowing vacuum, and obtained the global existence of weak solutions which converge to the strong solution of the viscous quasi-geostrophic equation with free surface term.  Under a condition of smallness on the initial data, Haspot \cite{Haspot-2016} investigated
the existence of local and global strong solutions with some different types of density-dependent viscosity and capillarity coefficients. 

In sharp contrast to the one-dimensional case, the global existence of strong solutions to the multi‑dimensional Navier–Stokes–Korteweg system is currently known only under smallness conditions on the initial data. In fact, the problem of global existence for arbitrarily large data remains largely open in higher dimensions. As a first step toward this goal, we establish the global existence of strong solutions to system \eqref{Equ1}–\eqref{vis'} on the torus $\mathbb{T}^N$ with large initial data away from vacuum.

		More precisely, we study system \eqref{Equ1}--\eqref{vis'} with prescribed initial data \((\varrho_0,u_0)\), which are periodic with period $1$ in each space direction. We require that
        \begin{align}\label{ini data}
			\varrho(x,0)=\varrho_0(x),\quad u(x,0)=u_0(x),\quad x\in\mathbb{T}^N,
		\end{align}
where \(\mathbb{T}^N=\mathbb{R}^N/\mathbb{Z}^N\) with \(N=2,3\). With the choice $\kappa(\varrho)=\frac{1}{\varrho}$ from \eqref{vis coeff} and \eqref{vis'}, the Korteweg tensor takes the following form:
		\begin{align*}
			\divg\mathbb{K}=\divg(\varrho\nabla\nabla\log\varrho).
		\end{align*}
		When the density is positive, we define the effective velocity, introduced by J\"ungel \cite{Jun-2010}, as
		\begin{align*}
			v=u+\nabla\log\varrho.
		\end{align*}
		This transformation converts system \eqref{Equ1}--\eqref{vis'} into the following parabolic system:
		\begin{equation}
			\label{Equ2}
			\left\{
			\begin{array}{l}
				\varrho_t+\div(\varrho v)-\Delta\varrho=0,\\
				\varrho v_t+\varrho(v-\nabla\log\varrho)\cdot\nabla v+\nabla P=\div(\varrho\nabla v).
			\end{array}
			\right.
		\end{equation}
		Accordingly, the initial effective velocity $v_0$ is given by
		\begin{align}\label{ini v_0}
			v_0=u_0+\nabla\log\varrho_0.
		\end{align}
        A key structural feature of \eqref{Equ2}$_2$ relies on a precise relation between the coefficients in \eqref{vis coeff}, that is, $\nu_0 = \kappa_0$.
		
		Before stating our main results, we introduce the following notations and conventions throughout this paper:
		\begin{align*}
			\int fdx=\int_{\mathbb{T}^N} fdx,\quad \int_{Q_T}f dxdt=\int_0^T\int f dxdt,\quad Q_T=\mathbb{T}^N\times[0,T].
		\end{align*}
		For \(1\le s\le\infty\), we denote the standard Lebesgue and Sobolev spaces as follows:
		\begin{align*}
			L^s=L^s(\mathbb{T}^N),\quad W^{k,s}=W^{k,s}(\mathbb{T}^N),\quad H^s=W^{s,2}.
		\end{align*}
		For \(1\le a,s\le\infty\), \(k\in\mathbb N^+\) and \(T>0\), we denote
		\begin{align*}
        \norm{f}_{L^a_TL^s}=\norm{f}_{L^a(0,T;L^s)},\quad\norm{f}_{L^a_TW^{k,s}}=\norm{f}_{L^a(0,T;W^{k,s})}.
		\end{align*}
        The constants $C$ appearing in the proofs of Sections 2--4 may vary from line to line and generally depend on quantities such as certain norms of the initial data or the system parameters stated in the corresponding proposition or lemma.
        
		We now state the main result on the global existence of strong solutions to problem \eqref{Equ1}--\eqref{ini data}. 
		\begin{thm}\label{Thm 1.1}
			Let $N=2$ or $N=3$. Assume that $\gamma$ satisfies
			\begin{align}
				&N=2,\quad \gamma\in[1,\infty);\label{2d gamma}\\
				&N=3,\quad \gamma\in\Big[1,\frac{7}{3}\Big),\label{3d gamma}
			\end{align}
			and that the initial data $(\varrho_0,u_0)$ satisfy 
			\begin{align}
				0<\underline{\varrho_0}\le \varrho_0\leq \overline{\varrho_0},\quad\varrho_0\in H^3,\quad u_0\in H^2,
			\end{align}
			where $\underline{\varrho_0}$ and $\overline{\varrho_0}$ are positive constants. Then the problem \eqref{Equ1}--\eqref{ini data} admits a unique global strong solution $(\varrho,u)$ satisfying for any $0<T<\infty$ and $(x,t)\in \mathbb{T}^N\times[0,T]$,
			\begin{align}
				\left\{
				\begin{array}{l}
					(C(T))^{-1}\leq \varrho(x,t)\leq C(T),\\
					\varrho\in C([0,T];H^3)\cap L^2(0,T;H^4),\ \varrho_t\in C([0,T];H^1)\cap L^2(0,T;H^2),\\
					u\in C([0,T];H^2)\cap L^2(0,T;H^3),\ u_t\in L^\infty(0,T;L^2)\cap L^2(0,T;H^1),
				\end{array}
				\right.
			\end{align}
		where the constant \(C(T)>0\) depends on the initial data and \(T\).
		\end{thm}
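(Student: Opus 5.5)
The argument is a continuation one. Local existence and uniqueness of strong solutions away from vacuum is classical for Korteweg-type systems (Hattori–Li, Kotschote). So it suffices to derive a priori bounds on $[0,T]$ for $\varrho$, $\varrho^{-1}$, $\|\varrho\|_{H^3}$ and $\|u\|_{H^2}$ that depend only on $T$ and the initial data. We work throughout with the parabolic reformulation \eqref{Equ2} in terms of $(\varrho,v)$, $v=u+\nabla\log\varrho$. Once $\varrho^{-1}$ and $v$ are bounded in $L^\infty$, the system becomes uniformly parabolic with bounded drift, and higher regularity follows by standard energy methods.

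\textbf{Step 1: basic and BD-entropy estimates.} Test \eqref{Equ2}$_2$ with $v$ and use \eqref{Equ2}$_1$. This gives the effective energy identity for
$$\int\Big(\tfrac12\varrho|v|^2+\Pi(\varrho)\Big)dx,$$
with $\Pi$ the pressure potential, together with the dissipation $\int_{Q_T}\varrho|\nabla v|^2+P'(\varrho)|\nabla\varrho|^2/\varrho$. Combined with the original energy, this yields $\sqrt\varrho u,\ \nabla\sqrt\varrho\in L^\infty_TL^2$, $\varrho\in L^\infty_TL^\gamma$, and the Bresch–Desjardins-type control of $\varrho|\nabla^2\log\varrho|^2$. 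Next, test with $\varrho|v|^{p-2}v$-type weights to obtain $\sup_t\int\varrho|v|^p$ for all finite $p$. The convective term $\varrho\nabla\log\varrho\cdot\nabla v=\nabla\varrho\cdot\nabla v$ combines with $\div(\varrho\nabla v)$ into $\varrho\Delta v$, which makes these weighted $L^p$ estimates close. The pressure contribution $\int\nabla P\cdot\varrho|v|^{p-2}v$ is where the restriction on $\gamma$ enters. We integrate it by parts and use $\varrho\in L^\infty_TL^q$ bounds obtained from the parabolic equation \eqref{Equ2}$_1$ with drift $\varrho v$. In 3D, closing these Sobolev/interpolation exponents is what forces $\gamma<7/3$. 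We also obtain the upper bound $\varrho\le C$ by $L^p$ iteration on \eqref{Equ2}$_1$, using the control of $\varrho v$.

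\textbf{Step 2 (main obstacle): $L^\infty$ bound on $v$ versus $\varrho^{-1}$, and the lower bound on the density.} Run a modified De Giorgi/Nash–Moser iteration on $|v|^{p}$ in \eqref{Equ2}$_2$, which is a degenerate parabolic equation with weight $\varrho$. The weight's degeneracy enters only through $\|\varrho^{-1}\|_{L^\infty}$. Track the $p$-dependence carefully, using the $p$-uniform weighted bounds from Step 1 (growth like $\sqrt p$). The aim is the estimate
$$\|v\|_{L^\infty(Q_T)}\le C\sqrt{\log\big(e+\|\varrho^{-1}\|_{L^\infty(Q_T)}\big)}.$$
Then write the equation for $w=-\log\varrho$, namely $w_t+v\cdot\nabla w-\Delta w=\div v-|\nabla w|^2$ (up to sign conventions), or use comparison for $\varrho^{-1}$ in \eqref{Equ2}$_1$. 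Apply a maximum-principle/Moser argument with drift $v$ to get $\|\varrho^{-1}\|_{L^\infty}\le C\exp\big(C\|v\|_{L^\infty}^2\big)$, or at least sub-critical growth of this type. Substituting the first bound closes the argument only if the constants beat each other. I would first try to obtain $\log\|\varrho^{-1}\|_\infty\le C(1+\|v\|_\infty)$, which is linear rather than quadratic, via a heat-kernel/Harnack bound for the drift-diffusion equation for $\varrho$. Combined with the square-root logarithmic estimate, this gives $X\le C\sqrt{X}+C$ for $X=\log(e+\|\varrho^{-1}\|_\infty)$, and hence a bound. This balance is the delicate point.

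\textbf{Step 3: higher regularity and conclusion.} With $C^{-1}\le\varrho\le C$ and $v\in L^\infty$, the equations in \eqref{Equ2} are uniformly parabolic. Successive energy estimates then give $\nabla v\in L^\infty_TL^2\cap L^2_TH^1$, $\varrho\in L^\infty_TH^2$, and then by differentiating, $\varrho\in L^\infty_TH^3\cap L^2_TH^4$ and $v\in L^\infty_TH^2\cap L^2_TH^3$. Here $v_0\in H^2$ because $\varrho_0\in H^3$ and $u_0\in H^2$. Return to $u=v-\nabla\log\varrho$ to obtain the stated regularity, including that of $u_t$ and $\varrho_t$; time continuity follows by standard arguments. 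Uniqueness comes from a direct $L^2$ energy estimate on the difference of two solutions in this class. The continuation criterion then extends the local solution to all $T$.
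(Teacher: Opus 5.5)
Your outline follows the paper's architecture: energy and BD-entropy bounds, a higher-integrability estimate for $v$ that produces $\gamma<\frac73$, an iteration on \eqref{Equ2}$_1$ giving $\varrho\le C$, weighted $L^p$ bounds growing like $\sqrt p$, the critical estimate $\norm{v}_{L^\infty}\le C_v\sqrt{\log\Theta_T}$, and routine higher-order estimates. The genuine difference is how the lower bound is closed. The paper uses no linear bound. It tests the equation for $\tau=\varrho^{-1}$ with $\tau^{p+1}$ and obtains $\frac{d}{dt}\int\tau^{p+2}dx\le(p+2)^2\norm{v}_{L^\infty(Q_T)}^2\int\tau^{p+2}dx$. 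After Gr\"onwall and a Moser iteration from a fixed level, it arrives at $\Theta_T\le C\Theta_T^{\frac12+r^2TC_v^2}$. The exponent carries the length of the time interval, so your worry that $\norm{\varrho^{-1}}_{L^\infty}\le C\exp(C\norm{v}_{L^\infty}^2)$ closes only if the constants beat each other is unfounded. One takes $T_0$ with $r^2T_0C_v^2\le\frac14$ and steps forward in intervals of length $T_0$; this works because $C_v$ is uniform on $[0,T^*)$.

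Your linear alternative $\log\norm{\varrho^{-1}}_{L^\infty(Q_T)}\le C(1+\norm{v}_{L^\infty(Q_T)})$ is also correct and provable along the lines you indicate. By parabolic scaling, the fundamental solution of $\partial_t-\Delta+\divg(v\,\cdot)$ obeys a Gaussian lower bound with universal constants on time scales $t\le\norm{v}_{L^\infty}^{-2}$. Chaining $O(1+\norm{v}_{L^\infty})$ such steps of spatial size $\norm{v}_{L^\infty}^{-1}$ shows that, after a time lag of order $\norm{v}_{L^\infty}^{-1}$, the kernel is bounded below by $e^{-C(1+\norm{v}_{L^\infty})}$ on all of $\mathbb{T}^3\times\mathbb{T}^3$. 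Mass conservation (or $\varrho_0\ge\underline{\varrho_0}$ at early times) then gives the claim, with constants depending only on $\int\varrho_0\,dx$ and $\underline{\varrho_0}$. Now $X\le C+C\sqrt X$ closes at once, with no time slicing, and it would close under any bound $\norm{v}_{L^\infty}=o(\log\Theta_T)$. The paper's route needs exactly $\norm{v}_{L^\infty}^2\lesssim\log\Theta_T$, but stays within elementary energy, Gr\"onwall and Moser arguments rather than importing heat-kernel lower bounds with tracked dependence on $\norm{v}_{L^\infty}$.

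Three steps of your sketch need the paper's specific input.

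\emph{The $\sqrt{\log}$ estimate.} Your phrase ``track the $p$-dependence carefully'' hides the main idea. The paper proves a reverse H\"older step $\int_{Q_T}\varrho|v|^{\frac53(p+2)}dxdt\le C\Theta_T(p+2)^{\frac{10}{3}}(\int_{Q_T}\varrho|v|^{p+2}dxdt)^{\frac53}+\cdots$, with one factor $\Theta_T$ per step coming from removing the weight before Sobolev. It iterates not from a fixed exponent but from $r^l\approx\log\Theta_T$, $r=\frac53$. Then the accumulated factor $\Theta_T^{\sum_{i>l}r^{-i}}$ is $O(1)$, while the starting norm is $O(\sqrt{r^l})=O(\sqrt{\log\Theta_T})$ by the $\sqrt p$ bound. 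From a fixed starting exponent one only gets $\norm{v}_{L^\infty}\lesssim\Theta_T^{c}$, which closes with neither lower-bound estimate.

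\emph{The ordering in Step 1.} Before $\varrho\le C$ is known, you can reach only one exponent $q\in(1,4)$ with $\gamma<1+\frac{4}{q+2}$, using $\sqrt\varrho\in L^\infty_TH^1$ and $\varrho^{\gamma/2}\in L^2_TH^1$. The De Giorgi gain requires $q>1$, and that is where $\gamma<\frac73$ comes from. The bounds for all $p$ with $\sqrt p$ growth come only afterwards, using $\varrho\le C$ and $\norm{v_0}_{L^\infty}$.

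\emph{The transport term.} It does not combine with $\divg(\varrho\nabla v)$ into $\varrho\Delta v$; one gets $\varrho\Delta v+2\nabla\varrho\cdot\nabla v$. The weighted estimates close because $\varrho(v-\nabla\log\varrho)=\varrho v-\nabla\varrho$ is exactly the flux in \eqref{Equ2}$_1$.
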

		\begin{rmk}\label{rmk1}
		    Haspot \cite{Haspot-2017} studied the existence of strong solutions to the Cauchy problem for the isothermal case \((\gamma=1)\) of the same compressible Navier-Stokes-Korteweg system \eqref{Equ1}--\eqref{vis'}. In his paper, he introduced a series of highly original and important ideas and techniques. Indeed, there is a crucial estimate in Haspot's article that requires the estimate for $\norm{\varrho^{\frac{1}{p+2}}v}_{L^\infty_TL^{p+2}}$ independent of $p$, which plays a key role in deriving the positive lower bound of the density. Unfortunately, following his approach, we are unable to obtain such a uniform estimate---even for the isothermal case of \cite{Haspot-2017} discussed in his paper. In fact, the estimate for $\norm{\varrho^{\frac{1}{p+2}}v}_{L^\infty_TL^{p+2}}$ necessarily depends on $p$. This dependence prevents us from deriving an upper bound for the effective velocity by passing to the limit $p \to \infty$, as was done in \cite{Haspot-2017},  ultimately leading to the inability to obtain a positive lower bound estimate for the density. To overcome this difficulty, we prove another novel critical control relation between $v$ and $\varrho^{-1}$, i.e., $\norm{v}_{L^\infty(Q_T)}\le C\sqrt{\log(e+\norm{\varrho^{-1}}_{L^\infty(Q_T)})}$. Based on these new observations, we are able to prove the results for the global strong solutions with large initial data for the isothermal and non-isothermal cases. One can refer to Proposition \ref{Prop v inf} for the details. Therefore, Theorem \ref{Thm 1.1} can be regarded as presenting the first existence result of global strong solutions for the compressible fluid dynamics equations with physical significance, applicable to the three-dimensional general domains and large initial data. In contrast, for incompressible fluids, the only known existence result of global strong solutions for three-dimensional general domains with large initial data dates back to Cao and Titi \cite{Cao-Titi} in 2007.
		\end{rmk}
		\begin{rmk}
       Another crucial argument is that we can establish the upper bound of the density to the general adiabatic exponent $\gamma$ in the three-dimensional case. One can refer to Proposition \ref{Prop R_T} for the details. 
        \end{rmk}
		\begin{rmk}
    We primarily focus on the case $\gamma>1$ and $N=3$. For the isothermal case with $\gamma=1$, its proof can be carried out almost similarly to the non-isothermal case line by line. For the case $N=2$, we give a brief discussion in the final section.
        \end{rmk}

        The proof of Theorem \ref{Thm 1.1} mainly consists of three steps. In the first step, we obtain an upper bound for the density via the mass equation, which is primarily achieved through the high integrability of the effective velocity and De Giorgi iteration. In the second step, we establish a precise control relation between the effective velocity and the lower bound of the density, as stated in Remark \ref{rmk1}. This estimate, in turn, yields a positive lower bound for the density itself. To this end, we devise an original modified Nash–Moser iteration scheme to estimate the $L^\infty$-norm of the effective velocity and that of $\varrho^{-1}$ step by step. Notably, the method developed here can be adapted to enhance the regularity of solutions for a broad class of parabolic systems, from finite $L^p$ regularity to $L^\infty$ bounds. In the third step, based on the estimates for the upper and lower bounds of the density, we obtain the higher-order regularity of both the density and the effective velocity, which ensures that the strong solution exists globally.
        
        The remainder of the paper is organized as follows. Section 2 establishes the upper bound for the density, while Section 3 provides the positive lower bound for the density. Section 4 is devoted to higher-order derivative estimates for the density and the effective velocity. Finally, the proof of Theorem~\ref{Thm 1.1} is completed in Section 5.

	\section{Upper bound for the density (3D)}
	Owing to the better parabolic structure of \eqref{Equ2} compared with the original system \eqref{Equ1}, we focus on \eqref{Equ2}. Standard theory yields that system \eqref{Equ2}--\eqref{ini v_0} admits a unique local strong solution $(\varrho,v)$ on $\mathbb{T}^{3}\times[0,T^*)$ with maximal existence time \(T^* > 0\), and this solution satisfies $\varrho>0$ on its interval of existence. To prove that the solution is global, we argue by contradiction. Suppose that \(T^* < \infty\). Then, for any \(T\) satisfying \(0 < T < T^{*}\), the solution is well-defined on \(\mathbb{T}^3\times[0, T]\).
	
	We begin with the standard energy estimate.  Denote 
	\begin{align*}
			E_0=\int \left(\frac{1}{2}\varrho_0|u_0|^2+\frac{1}{\gamma-1}\varrho_0^\gamma+2|\nabla\sqrt{\varrho_0}|^2\right)dx.
	\end{align*}
	\begin{prop}\label{Prop ene}
	There exists a constant \(C>0\) depending on \(\gamma\) and \(E_0\) such that
		\begin{align}\label{ene est}
			\sup_{0\leq t\leq T}\int \left(\varrho|u|^2+\varrho^\gamma+|\nabla\sqrt{\varrho}|^2\right)dx+\int_{Q_T} \varrho|\mathbb{D} u|^2dxdt\leq C.
		\end{align}
	\end{prop}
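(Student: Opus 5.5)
We prove the energy identity for the original system \eqref{Equ1} with coefficients \eqref{vis coeff}, $\nu_0=\kappa_0=1$, $\lambda=0$. The solution is smooth and $\varrho>0$ on $[0,T]$, so all integrations by parts on $\mathbb{T}^3$ are justified. The plan has three steps.

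\textbf{Step 1 (kinetic energy).} We multiply the momentum equation by $u$ and integrate. The mass equation gives
$$\frac{d}{dt}\int \frac12\varrho|u|^2dx.$$
The convective terms cancel, since $\int(\varrho u)_t\cdot u+\operatorname{div}(\varrho u\otimes u)\cdot u=\frac{d}{dt}\int\frac12\varrho|u|^2$. Next, $\varrho_t=-\operatorname{div}(\varrho u)$ yields
$$\int\nabla\varrho^\gamma\cdot u\,dx=\frac{d}{dt}\int\frac{\varrho^\gamma}{\gamma-1}dx.$$
For $\gamma=1$ the potential $\frac{\varrho^\gamma}{\gamma-1}$ is replaced by $\varrho\log\varrho$; this is controlled from below using conservation of mass. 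Finally, the viscous term gives $2\int\varrho|\mathbb{D}u|^2dx$.

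\textbf{Step 2 (capillary term; the main step).} We must show
$$-\int\operatorname{div}(\varrho\nabla\nabla\log\varrho)\cdot u\,dx=\frac{d}{dt}\int 2|\nabla\sqrt\varrho|^2dx.$$
To do so, we use the identity $\operatorname{div}(\varrho\nabla\nabla\log\varrho)=2\varrho\nabla\Big(\frac{\Delta\sqrt\varrho}{\sqrt\varrho}\Big)$. Integrating by parts,
$$-\int 2\varrho u\cdot\nabla\frac{\Delta\sqrt\varrho}{\sqrt\varrho}\,dx=\int 2\operatorname{div}(\varrho u)\frac{\Delta\sqrt\varrho}{\sqrt\varrho}\,dx=-\int 2\varrho_t\frac{\Delta\sqrt\varrho}{\sqrt\varrho}\,dx.$$
Since $\varrho_t/\sqrt\varrho=2(\sqrt\varrho)_t$, this equals
$$-4\int(\sqrt\varrho)_t\Delta\sqrt\varrho\,dx=\frac{d}{dt}\int2|\nabla\sqrt\varrho|^2dx.$$
The delicate part is verifying the tensor identity, which is a direct computation with $\nabla\log\varrho=2\nabla\sqrt\varrho/\sqrt\varrho$. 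Integrating the resulting identity in time gives
$$\sup_{t}\int\Big(\frac12\varrho|u|^2+\frac{\varrho^\gamma}{\gamma-1}+2|\nabla\sqrt\varrho|^2\Big)dx+2\int_{Q_T}\varrho|\mathbb{D}u|^2dxdt= E_0,$$
which is \eqref{ene est}.

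\textbf{Step 3 (alternative check).} The same conclusion can be cross-checked through the BD-entropy structure of \eqref{Equ2}. Testing \eqref{Equ2}$_2$ by $v$ gives control of $\int\varrho|v|^2$. Since $v=u+2\nabla\sqrt\varrho/\sqrt\varrho$, we have $\varrho|u|^2\le2\varrho|v|^2+8|\nabla\sqrt\varrho|^2$, which is consistent with the bounds above. The primary proof, however, is the direct identity of Steps 1 and 2.
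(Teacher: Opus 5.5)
Your argument is correct and is the standard energy identity the paper relies on (the paper omits the proof and cites Proposition 3.1 of Yu--Wu). You test the momentum equation with $u$, use the continuity equation for the kinetic and pressure terms, and rewrite the Korteweg term via $\operatorname{div}(\varrho\nabla\nabla\log\varrho)=2\varrho\nabla(\Delta\sqrt\varrho/\sqrt\varrho)$ to produce $\frac{d}{dt}\int 2|\nabla\sqrt\varrho|^2dx$. One cosmetic point: the identity holds for each fixed $t$, so taking the supremum gives a bound by $2E_0$ rather than equality with $E_0$; that bound is all the estimate requires, and Step 3 is not needed.
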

	For brevity we omit the proof of Proposition \ref{Prop ene}; details can be found in Proposition 3.1 of \cite{Yu-Wu-2021}. The standard approach then gives the following $L^2$-estimate for $v$.
	\begin{prop}
		There exists a constant \(C>0\) depending on \(\gamma\) and \(E_0\) such that
		\begin{align}\label{ent est}
			\sup_{0\leq t\leq T}\int \varrho|v|^2dx+\int_{Q_T} |\nabla\varrho^{\frac{\gamma}{2}}|dxdt+\int_{Q_T}\varrho|\nabla v|^2dxdt\leq C.
		\end{align}
	\end{prop}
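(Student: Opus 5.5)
We test the momentum equation \eqref{Equ2}$_2$ against $v$ and combine the result with the mass equation; this is the BD-entropy computation written in the effective velocity. First observe that the convective operator satisfies
\[
\varrho\partial_t+\varrho(v-\nabla\log\varrho)\cdot\nabla=\varrho\partial_t+(\varrho v-\nabla\varrho)\cdot\nabla ,
\]
and by \eqref{Equ2}$_1$ we have $\varrho_t+\div(\varrho v-\nabla\varrho)=0$. Hence $\varrho$ is transported by the field $w=v-\nabla\log\varrho=u$, i.e.\ the continuity equation in the form $\varrho_t+\div(\varrho u)=0$. Multiplying \eqref{Equ2}$_2$ by $v$ and integrating over $\mathbb{T}^3$ therefore gives
\[
\frac12\frac{d}{dt}\int\varrho|v|^2dx+\int\varrho|\nabla v|^2dx+\int v\cdot\nabla P\,dx=0 .
\]
The viscous term integrates by parts without boundary terms on the torus.

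For the pressure term, split $v=u+\nabla\log\varrho$. For the first piece, $\int u\cdot\nabla P\,dx=\frac{1}{\gamma-1}\frac{d}{dt}\int\varrho^\gamma dx$, using the continuity equation with velocity $u$ (for $\gamma=1$ one uses $\varrho\log\varrho$ instead). For the second piece,
\[
\int\nabla\log\varrho\cdot\nabla\varrho^\gamma dx=\gamma\int\varrho^{\gamma-2}|\nabla\varrho|^2dx=\frac{4}{\gamma}\int|\nabla\varrho^{\frac{\gamma}{2}}|^2dx\ge0 .
\]
Integrating in time yields
\[
\sup_t\int\Big(\frac12\varrho|v|^2+\frac{\varrho^\gamma}{\gamma-1}\Big)dx+\int_{Q_T}\varrho|\nabla v|^2dxdt+\frac4\gamma\int_{Q_T}|\nabla\varrho^{\gamma/2}|^2dxdt\le \int\Big(\frac12\varrho_0|v_0|^2+\frac{\varrho_0^\gamma}{\gamma-1}\Big)dx .
\]
The right-hand side is bounded by $C(E_0)$, since
\[
\int\varrho_0|v_0|^2dx\le 2\int\varrho_0|u_0|^2dx+8\int|\nabla\sqrt{\varrho_0}|^2dx ,
\]
using $\varrho|\nabla\log\varrho|^2=4|\nabla\sqrt\varrho|^2$.

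This gives \eqref{ent est}, with the gradient term read as $|\nabla\varrho^{\gamma/2}|^2$, which is what the computation produces. The version with the exponent $1$ then follows from Cauchy--Schwarz and the finite measure of $Q_T$, with a constant depending also on $T$. Alternatively, one can bound $|\nabla\varrho^{\gamma/2}|\le 1+|\nabla\varrho^{\gamma/2}|^2$, whose integral over $Q_T$ is at most $T+C$.

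The main point requiring care is the exact cancellation in the transport term. It uses the identity $\div(\varrho\nabla\log\varrho)=\Delta\varrho$, which in turn relies on $\nu_0=\kappa_0$ as encoded in \eqref{Equ2}. One also has to make sure no cross term survives in $\frac12\int\varrho\,\partial_t|v|^2+\frac12\int\varrho u\cdot\nabla|v|^2$. Both are legitimate for the local strong solution, since $\varrho>0$ and the solution is smooth enough on $[0,T]$. Consistency with Proposition \ref{Prop ene} also serves as a check, because $\varrho|v|^2\le2\varrho|u|^2+8|\nabla\sqrt\varrho|^2$.
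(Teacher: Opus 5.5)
Your proposal is correct and is exactly the standard BD-entropy computation (testing \eqref{Equ2}$_2$ with $v$ and using $\varrho_t+\div(\varrho u)=0$ with $u=v-\nabla\log\varrho$) that the paper invokes without writing out. You are also right that the computation yields $\int_{Q_T}|\nabla\varrho^{\gamma/2}|^2dxdt$, which is the form the paper actually uses later in \eqref{u2}, so the exponent $1$ in the statement is a typo.
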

	
	The following Proposition \ref{Prop high v} shows the higher integrability of the effective velocity, which is crucial for obtaining an upper bound on the density via De Giorgi iteration. For any \(\gamma\in(1,\tfrac73)\) we choose \(q\in(1,4)\) depending only on \(\gamma\) such that
	 \begin{align}\label{u1}
		\gamma<1+\frac{4}{q+2}.
	\end{align}
	 \begin{prop}\label{Prop high v}
		There exists a constant \(C>0\) depending on \(T^*, q, \gamma, E_0,\) and \(  \|\varrho_0^{1/(q+2)}v_0\|_{L^{q+2}}\) such that
		\begin{align}\label{u1.5}
			\sup_{0\leq t\leq T}\norm{\varrho^{\frac{1}{q+2}}v}_{L^{q+2}}\leq C.
		\end{align}
	\end{prop}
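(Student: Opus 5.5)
Multiply \eqref{Equ2}$_2$ by $|v|^{q}v$ and integrate over $\mathbb{T}^3$. The convective part combines with the mass equation. Rewrite \eqref{Equ2}$_2$ with $\varrho v_t=(\varrho v)_t-v\varrho_t$ and use $\varrho_t+\div(\varrho u)=0$, where $u=v-\nabla\log\varrho$ and $\varrho u=\varrho v-\nabla\varrho$. The transport term is exactly $\varrho u\cdot\nabla v$, so we obtain the material-derivative identity
\begin{align*}
\frac{1}{q+2}\frac{d}{dt}\int\varrho|v|^{q+2}dx+\int\varrho|v|^q|\nabla v|^2dx+q\int\varrho|v|^{q}\big|\nabla|v|\big|^2dx=-\int\nabla P\cdot|v|^qv\,dx .
\end{align*}
The dissipation is therefore at least $\int\varrho|v|^q|\nabla v|^2$.

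The pressure term is the only source. Write $\nabla P=\gamma\varrho^{\gamma-1}\nabla\varrho$. There are two options: integrate by parts to get $\int P\,\div(|v|^qv)$, or keep it in the form $\frac{2\gamma}{\gamma}\varrho^{\gamma/2}\nabla\varrho^{\gamma/2}$. I would use the integrated-by-parts form. It is bounded by
\begin{align*}
C\int\varrho^{\gamma}|v|^q|\nabla v|\le \frac12\int\varrho|v|^q|\nabla v|^2+C\int\varrho^{2\gamma-1}|v|^q .
\end{align*}
By Hölder with weights, $\int\varrho^{2\gamma-1}|v|^q=\int(\varrho|v|^{q+2})^{\frac{q}{q+2}}\varrho^{2\gamma-1-\frac{q}{q+2}}$. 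Hence
\begin{align*}
\int\varrho^{2\gamma-1}|v|^q\le\Big(\int\varrho|v|^{q+2}\Big)^{\frac{q}{q+2}}\Big(\int\varrho^{(2\gamma-1)\frac{q+2}{2}-\frac q2}\Big)^{\frac{2}{q+2}} .
\end{align*}
The exponent is $s=(q+2)(\gamma-1)+1$. Young's inequality then gives $y'\le C(1+y)+C\int\varrho^{s}dx$ with $y=\int\varrho|v|^{q+2}$. After Gronwall, it remains to show $\int_0^T\int\varrho^{s}\,dx\,dt\le C$.

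This bound is the main obstacle, and it is where condition \eqref{u1} enters. From \eqref{ene est} and \eqref{ent est} we know $\sqrt\varrho\in L^\infty_TH^1$, so $\varrho\in L^\infty_TL^3$, and $\varrho^{\gamma/2}\in L^2_TH^1$ (I read the $\nabla\varrho^{\gamma/2}$ term in \eqref{ent est} as squared, i.e.\ in $L^2$). Sobolev embedding gives $\varrho^{\gamma/2}\in L^2_TL^6$, i.e.\ $\varrho\in L^{\gamma}_TL^{3\gamma}$. Interpolating between $L^\infty_TL^{3}$ (or $L^\infty_TL^\gamma$) and $L^\gamma_TL^{3\gamma}$ controls $\varrho$ in $L^{r}(Q_T)$ for $r$ up to about $\gamma+2$. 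The standard parabolic interpolation gives $\varrho^{\gamma/2}\in L^{10/3}$-type bounds, and combining with $L^\infty_TL^3$ yields $\varrho\in L^{\gamma+2}(Q_T)$.

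So we need $s\le\gamma+2$, i.e.\ $(q+2)(\gamma-1)\le \gamma+1$. This is implied by $\gamma<1+\frac{4}{q+2}$ for $q<4$: when $q+2<6$ we have $(q+2)(\gamma-1)<4$, and $\gamma+1>2$. I would verify this interpolation exponent count carefully. If it fails, I would instead use the $\varrho^{\gamma/2}\nabla\varrho^{\gamma/2}$ form, which avoids $\nabla v$. That route gives $\int\varrho^{\gamma/2}|\nabla\varrho^{\gamma/2}||v|^{q+1}$, to be bounded with weight $\varrho|v|^{q+2}$ and $|\nabla\varrho^{\gamma/2}|^2$ via Hölder.

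Once the space-time bound holds, Gronwall on $[0,T]$ with $T<T^*$ gives \eqref{u1.5}. The constant depends on $T^*$, $q$, $\gamma$, $E_0$ and the initial $\|\varrho_0^{1/(q+2)}v_0\|_{L^{q+2}}$. The computation is justified because the local strong solution has $\varrho>0$ and enough regularity on $[0,T]$.
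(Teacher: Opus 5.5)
\textbf{There is a genuine gap, at the step where you apply Young's inequality and then Gronwall.} Write $y=\int\varrho|v|^{q+2}dx$ and $A=\int\varrho^{s}dx$ with $s=(q+2)(\gamma-1)+1$. Splitting $y^{\frac{q}{q+2}}A^{\frac{2}{q+2}}\le C(y+A)$ forces you to bound $\int_0^TA\,dt$, i.e.\ to show $\varrho\in L^{s}(Q_T)$. The only space--time bound you have is $\varrho\in L^{\gamma+2}(Q_T)$.

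Your claim that \eqref{u1} implies $(q+2)(\gamma-1)\le\gamma+1$ is false. The facts $(q+2)(\gamma-1)<4$ and $\gamma+1>2$ do not combine to give it. For example, $\gamma=2$, $q=\frac32$ satisfies \eqref{u1} (since $2<1+\frac87$), but $s=\frac92>4=\gamma+2$. In fact your condition, together with $q>1$ (which the later De Giorgi iteration needs), forces $3(\gamma-1)<\gamma+1$, i.e.\ $\gamma<2$. So the whole range $\gamma\in[2,\frac73)$ is lost, and even for $\gamma<2$ you do not cover every $q$ allowed by \eqref{u1}.

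Your fallback with $\varrho^{\gamma/2}\nabla\varrho^{\gamma/2}$ does not close either. Pairing against $\norm{\nabla\varrho^{\gamma/2}}_{L^2}$ leaves $\int\varrho^{\gamma}|v|^{2q+2}dx$. This is of higher order than $y$, and the weighted dissipation cannot absorb it without a lower bound on $\varrho$.

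\textbf{The missing idea is to keep the sublinear structure instead of linearizing it.} From $y'\le Cy^{\frac{q}{q+2}}A^{\frac{2}{q+2}}$ you get directly
\begin{align*}
\frac{d}{dt}\norm{\varrho^{\frac{1}{q+2}}v}_{L^{q+2}}^2=\frac{2}{q+2}\,y^{-\frac{q}{q+2}}y'\le C\norm{\varrho}_{L^s}^{\frac{2s}{q+2}} .
\end{align*}
So you only need $\varrho\in L^{\frac{2s}{q+2}}_TL^{s}$, whose time exponent is much smaller than $s$. Interpolate between $L^\infty_TL^3$ and $L^\gamma_TL^{3\gamma}$, exactly the two bounds you identified, at spatial exponent $s\ge3$ (for $s<3$, $L^\infty_TL^3$ alone suffices). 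This gives time integrability $\frac{s(\gamma-1)}{s-3}$. The requirement $\frac{2s}{q+2}\le\frac{s(\gamma-1)}{s-3}$ is equivalent to $(q+2)(\gamma-1)\le4$, which is precisely \eqref{u1}.

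This is the paper's route. It first bounds $\varrho^{s}\le\varrho^{\zeta}+1$ with $\zeta=\frac{(q+2)\gamma-q+4}{2}$, then interpolates into $L^{\frac{2\zeta}{q+2}}_TL^{\zeta}$ with weight $\xi=\frac{4-q}{2\zeta}$. This step is exactly where the threshold $\gamma<\frac73$ comes from, and it is what your Young/Gronwall linearization throws away.
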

	\begin{proof}
		Multiplying equation \(\eqref{Equ2}_2\) by \(|v|^{q}v\) and integrating the resulting equation over \(\mathbb{T}^3\), we use integration by parts and Young's inequality to obtain
			\begin{align*}
			\begin{split}
				&\quad\frac{1}{q+2}\frac{d}{dt}\int \varrho |v|^{q+2}dx+\int \varrho |v|^q|\nabla v|^2dx+q\int \varrho |v|^q|\nabla |v||^2dx\\
				&=\int \varrho^\gamma\divg(|v|^qv)dx\\
				&=\int \varrho^\gamma|v|^q\div vdx+ q\int \varrho^\gamma|v|^{q-1}v\cdot\nabla |v|dx\\
				&\leq \frac{1}{2}\int \varrho |v|^q|\nabla v|^2dx+\frac{q}{2}\int \varrho |v|^q|\nabla |v||^2dx+C\int \varrho^{2\gamma-1}|v|^qdx,
			\end{split}
		\end{align*}
		where we have used the simple fact $|\divg v|\leq \sqrt{3}|\nabla v|$. H\"{o}lder's inequality implies that
		 	\begin{align}\label{u1.7}
			\begin{split}
				\frac{d}{dt}\int \varrho |v|^{q+2}dx&\leq C\int \varrho^{2\gamma-1}|v|^qdx\\
				&\leq C\left(\int \varrho|v|^{q+2}dx\right)^{\frac{q}{q+2}}\left(\int \varrho^{q\gamma+2\gamma-q-1}dx\right)^{\frac{2}{q+2}}.
			\end{split}
		\end{align}
		To handle the right-hand side of the above inequality, we denote
		\begin{align*}
			\zeta=\frac{q\gamma+2\gamma-q+4}{2},\quad\xi=\frac{4-q}{q\gamma+2\gamma-q+4}.
		\end{align*}
		Using \eqref{ene est} and \eqref{ent est}, we obtain from the standard Sobolev embedding and H\"{o}lder's inequality that
			\begin{align}\label{u2}
			\begin{split}
				\norm{\varrho}_{L_T^{\frac{2}{q+2}\zeta}L^\zeta}&\leq C\norm{\varrho}_{L^\infty_T L^3}^{\xi}\norm{\varrho}_{L^\gamma_T L^{3\gamma}}^{1-\xi}\\
				&= C\norm{\sqrt{\varrho}}_{L^\infty_TL^6}^{2\xi}\norm{\varrho^{\frac{\gamma}{2}}}_{L^2_TL^6}^{\frac{2}{\gamma}(1-\xi)}\\
				&\leq C\norm{\sqrt{\varrho}}_{L^\infty_TH^1}^{2\xi}\norm{\varrho^{\frac{\gamma}{2}}}_{L^2_TH^1}^{\frac{2}{\gamma}(1-\xi)}\\
				&\leq C.
			\end{split}
		\end{align}
		Therefore, it follows from \eqref{u1.7} and \eqref{u1} that
		\begin{align*}
			\begin{split}
				\frac{d}{dt}\norm{\varrho^{\frac{1}{q+2}}v}_{L^{q+2}}^2&\leq C\left(\int \varrho^{q\gamma+2\gamma-q-1}dx\right)^{\frac{2}{q+2}}\\
				&=C\left(\int \varrho^{\zeta}\varrho^{\frac{q\gamma+2\gamma-q-6}{2}}dx\right)^{\frac{2}{q+2}}\\
				&\leq C\left(\int(\varrho^{\zeta}+1)dx\right)^{\frac{2}{q+2}}\\
				&\leq C\left(\int \varrho^\zeta dx\right)^{\frac{2}{q+2}}+C.
			\end{split}
		\end{align*}
		Integrating the above inequality over time and using \eqref{u2}, we obtain \eqref{u1.5}. This completes the proof of Proposition \ref{Prop high v}.
	\end{proof}
	
	Next we use the De Giorgi iteration to obtain an upper bound on the density. This approach originates from De Giorgi's method for improving the regularity of solutions to elliptic equations, which was later extended to parabolic equations by Ladyzhenskaya et al. in \cite{Ladyzhenskaia}, and has subsequently been elegantly applied by Haspot in \cite{Haspot-2017}. More recently, Yu and Wu \cite{Yu-Wu-2021} used it to establish an upper bound for the density in the two-dimensional Cauchy problem. Here, we adapt and extend this technique to establish the corresponding upper bound estimate in the three-dimensional case.
    
    For simplicity, we introduce the following notations:
	\begin{align*}
	&\varrho^{(k)}:=\max\{\varrho-k,0\},\text{ for }k\ge 2,\\
	&|\varrho^{(k)}|_{Q_t}:=\norm{\varrho^{(k)}}_{L^\infty_t L^2}+\norm{\nabla \varrho^{(k)}}_{L^2_tL^2},\\
	&\Gamma_k(t):=\{x\in\mathbb{T}^3\left|\varrho(x,t)\ge k \right.\},\\
	&\mu(k):=\int_0^T\mathcal{L}(\Gamma_k(t))^{\frac{q}{q+2}}dt,
	\end{align*}
    where \(\Gamma_k(t)\) denotes the upper level set of the density and \(\mathcal{L}(\Gamma_k(t))\) stands for the Lebesgue measure of \(\Gamma_k(t)\).
	Now we are ready to derive the space-time estimate of $\varrho^{(k)}$.
	\begin{lema}\label{Prop rho_k Q_T}
		There exists a constant \(C>0\) depending on \(T^*, q,\gamma, E_0\), and \(\|\varrho_0^{1/(q+2)}v_0\|_{L^{q+2}}\) such that for any $k\ge 2$,
			\begin{align}\label{rho_k Q_T}
			|\varrho^{(k)}|_{Q_T}\leq Ck\mu^{\frac{1}{2}}(k).
			\end{align}
	\end{lema}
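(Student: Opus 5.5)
We test the mass equation $\eqref{Equ2}_1$, $\varrho_t+\div(\varrho v)-\Delta\varrho=0$, against $\varrho^{(k)}$. This is the standard De Giorgi energy inequality on upper level sets. Since $\varrho^{(k)}=0$ off $\Gamma_k(t)$ and $\nabla\varrho^{(k)}=\nabla\varrho\,\mathbf 1_{\Gamma_k(t)}$, integration by parts gives
\begin{align*}
\frac12\frac{d}{dt}\int|\varrho^{(k)}|^2dx+\int|\nabla\varrho^{(k)}|^2dx=\int\varrho v\cdot\nabla\varrho^{(k)}dx
\le\frac12\int|\nabla\varrho^{(k)}|^2dx+\frac12\int_{\Gamma_k(t)}\varrho^2|v|^2dx .
\end{align*}
For $k\ge2$ and $\overline{\varrho_0}\le k$ (i.e.\ $k\ge\max\{2,\overline{\varrho_0}\}$), the initial term $\varrho_0^{(k)}$ vanishes. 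For the finitely many smaller $k$ we absorb the initial contribution into the constant. That contribution is bounded by $\|\varrho_0\|_{L^2}$, and since $\mu(k)\ge\mu(\overline{\varrho_0})$ is bounded below away from zero unless the level set is empty, this should be handled with some care. Most simply, one restricts to $k\ge\overline{\varrho_0}$, which is all the iteration needs. Integrating in time then gives
\[
|\varrho^{(k)}|_{Q_T}^2\le C\int_0^T\!\!\int_{\Gamma_k(t)}\varrho^2|v|^2dxdt .
\]

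The main step is to bound the right-hand side by $Ck^2\mu(k)$ using Proposition \ref{Prop high v}. On $\Gamma_k(t)$ we split $\varrho=\varrho^{(k)}+k$, so that $\varrho^2|v|^2\le 2|\varrho^{(k)}|^2|v|^2+2k^2|v|^2$. For the term $k^2|v|^2$ we use $\varrho\ge k\ge 2$ on $\Gamma_k$, hence $|v|^{q+2}\le\varrho|v|^{q+2}$ there, and Hölder's inequality with exponents $\frac{q+2}{2}$ and $\frac{q+2}{q}$ gives
\[
\int_{\Gamma_k}k^2|v|^2dx\le k^2\Big(\int\varrho|v|^{q+2}dx\Big)^{\frac{2}{q+2}}\mathcal L(\Gamma_k(t))^{\frac{q}{q+2}}\le Ck^2\mathcal L(\Gamma_k(t))^{\frac{q}{q+2}},
\]
whose time integral is exactly $Ck^2\mu(k)$. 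This explains the exponent $\frac{q}{q+2}$ in the definition of $\mu$.

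The term $|\varrho^{(k)}|^2|v|^2$ is the obstacle, since $v$ is only in $L^{q+2}$ with weight. By Hölder,
\[
\int|\varrho^{(k)}|^2|v|^2dx\le C\|\varrho^{(k)}\|_{L^{r}}^2,\qquad \frac{2}{r}=1-\frac{2}{q+2}=\frac{q}{q+2},
\]
so $r=\frac{2(q+2)}{q}$. Since $q>1$ we have $r<6$, and the Gagliardo–Nirenberg interpolation $L^\infty_tL^2\cap L^2_t\dot H^1$ in 3D controls $\|\varrho^{(k)}\|_{L^{2a}_tL^{r}}$ with some time exponent. To absorb this into the left-hand side we also need smallness, which we get in either of two ways. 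One is to interpolate $\|\varrho^{(k)}\|_{L^r}\le\|\varrho^{(k)}\|_{L^{r_1}}^\theta\|\varrho^{(k)}\|_{L^6}^{1-\theta}$, bound the low norm uniformly using the energy bounds \eqref{ene est}, \eqref{ent est} (since $\varrho\in L^\infty_TL^3$ and $\varrho^{(k)}\le\varrho$), and apply Young. The other, which I would try first, is to work on short time intervals: split $[0,T]$ into finitely many subintervals of length $\delta$ (depending on $T^*$ and the constants) on which $\int_{t_i}^{t_{i+1}}\|\varrho^{(k)}\|_{L^r}^2dt\le \epsilon|\varrho^{(k)}|_{Q}^2$, using $r<6$ and the time factor $\delta^{1-3(\frac12-\frac1r)}$ from Gagliardo–Nirenberg. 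We then absorb on each subinterval and propagate, with the $L^2$ value at $t_i$ controlled by the previous step. Summing over the bounded number of intervals yields $|\varrho^{(k)}|_{Q_T}^2\le Ck^2\mu(k)$, i.e.\ \eqref{rho_k Q_T}.

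The step I expect to be most delicate is this absorption of $\int|\varrho^{(k)}|^2|v|^2$. The subinterval propagation must not lose the structure $k^2\mu(k)$ on the right-hand side. This works because each subinterval's initial datum $\|\varrho^{(k)}(t_i)\|_{L^2}^2$ is itself bounded by the previous interval's $Ck^2\mu(k)$, so the constants only multiply a bounded number of times.
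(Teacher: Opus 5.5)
Your proposal is correct provided you use the second (short-time) absorption. It takes a genuinely different route from the paper at the key step.

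The paper never discards the weight. It writes
\[
\int_{\Gamma_k(t)}\varrho^2|v|^2dx\le\norm{\varrho|v|^{q+2}}_{L^1}^{\frac{2}{q+2}}\norm{\varrho}_{L^{\frac{2q+2}{q}}(\Gamma_k(t))}^{\frac{2q+2}{q+2}},
\]
so after splitting $\varrho=\varrho^{(k)}+k$ the $\varrho^{(k)}$-contribution has homogeneity only $\frac{2q+2}{q+2}<2$. Interpolating between $L^\infty_TL^2$ and $L^2_TL^6$ on $\Gamma_k(t)$ gives $C|\varrho^{(k)}|_{Q_T}^{\frac{2q+2}{q+2}}\mu(k)^{\frac{2q+1}{5q+4}}$. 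A single Young inequality then absorbs it, leaving $C\mu(k)^{\frac{2q^2+5q+2}{5q+4}}\le C\mu(k)$, since the exponent is at least $1$ and $\mu(k)\le T^*$.

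You instead use $\varrho\ge k\ge 1$ on $\Gamma_k(t)$ to drop the weight. This makes the bad term quadratic, $C\int_0^T\norm{\varrho^{(k)}}_{L^r}^2dt$ with $r=\frac{2(q+2)}{q}<6$, so it can only be absorbed with a small constant. You obtain that smallness from subcriticality ($q>1$) plus short time. This is sound and can be said in one line: by Gagliardo--Nirenberg and Young, $C\norm{f}_{L^r}^2\le\frac12\norm{\nabla f}_{L^2}^2+C\norm{f}_{L^2}^2$. Gr\"onwall with $\varrho^{(k)}(0)=0$ then gives $|\varrho^{(k)}|_{Q_T}^2\le Ce^{CT^*}k^2\mu(k)$; your subinterval propagation is this Gr\"onwall argument done by hand.

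The paper's route gains a positive power of $\mu(k)$ on the nonlinear term, so plain Young suffices with no time-stepping. Yours has a simpler H\"older step, at the price of an $e^{CT^*}$ constant, which is harmless here.

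Two corrections. First, your first alternative fails as written. Bounding $\norm{\varrho^{(k)}}_{L^{r_1}}$ uniformly by the energy and then applying Young leaves an additive term $C_\epsilon T$ that is not proportional to $k^2\mu(k)$. Such a term would destroy the De Giorgi recursion in Proposition~\ref{Prop R_T}. It can be repaired in two ways. You can keep $r_1=2$ and close with Gr\"onwall, which is just your second route. Or, for $k$ large, you can extract smallness from $\norm{\varrho^{(k)}}_{L^r}\le\mathcal{L}(\Gamma_k(t))^{\frac1r-\frac16}\norm{\varrho^{(k)}}_{L^6}$ together with $\mathcal{L}(\Gamma_k(t))\le k^{-1}\norm{\varrho_0}_{L^1}$.

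Second, drop the sentence about ``finitely many smaller $k$'': $k$ is a continuous parameter, and $\mu(k)$ has no quantitative lower bound. In fact, for $2\le k<\norm{\varrho_0}_{L^\infty}$ the inequality cannot hold with a constant independent of $T$. Indeed $|\varrho^{(k)}|_{Q_T}\ge\norm{\varrho_0^{(k)}}_{L^2}>0$, while $\mu(k)\le T$. The paper's proof silently drops the initial term, which is legitimate only for $k\ge\norm{\varrho_0}_{L^\infty}$. That is the only range used in Proposition~\ref{Prop R_T}, since $k_n\ge M_{T^*}\hat{k}_0$. So your restriction $k\ge\overline{\varrho_0}$ is the right one.
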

	\begin{proof}
		For any $k\ge 2$, equation \(\eqref{Equ2}_1\) can be rewritten as
			\begin{align*}
			\partial_t(\varrho-k)+\divg(\varrho v)-\Delta(\varrho-k)=0.
		\end{align*}
		Multiplying the above equation by \(\varrho^{(k)}\), integrating the resulting equation over \(\mathbb{T}^3\), and using integration by parts and Young's inequality, we obtain
			\begin{align*}
			&\quad\frac{1}{2}\frac{d}{dt}\int |\varrho^{(k)}|^2dx+\int |\nabla\varrho^{(k)}|^2dx\\
			&=\int \nabla \varrho^{(k)}\cdot \varrho vdx\\
			&\leq \frac{1}{2}\int |\nabla\varrho^{(k)}|^2dx+\frac{1}{2}\int_{\Gamma_k(t)}\varrho^2|v|^2dx.
			\end{align*}
		Integrating this inequality in time yields
	 	\begin{align}\label{v norm'}
		\begin{split}
			|\varrho^{(k)}|_{Q_T}^2\leq C\int_0^T\int_{\Gamma_k(t)}\varrho^2|v|^2dxdt.
		\end{split}
		\end{align}
		Next, we estimate the right-hand side of the above inequality.  From \eqref{u1.5}, the definition of \(\mu(k)\), H\"{o}lder's inequality, and Young's inequality, we obtain
			\begin{align}\label{cc1'}
			\begin{split}
				&\quad C\int_0^T\int_{\Gamma_k(t)}\varrho^2|v|^2dxdt\\
				&\leq C\int_0^T\norm{\varrho v^{q+2}}_{L^1}^{\frac{2}{q+2}}\norm{\varrho}_{L^{\frac{2q+2}{q}}(\Gamma_k(t))}^{\frac{2q+2}{q+2}}dt\\
				&\leq C\int_0^T\Big(\norm{\varrho^{(k)}}_{L^{\frac{2q+2}{q}}(\Gamma_k(t))}^{\frac{2q+2}{q+2}}+\norm{k}_{L^{\frac{2q+2}{q}}(\Gamma_k(t))}^{\frac{2q+2}{q+2}}\Big)dt\\
				&\leq C\norm{\varrho^{(k)}}_{L_T^{\frac{2q+2}{q+2}}L^{\frac{2q+2}{q}}(\Gamma_k(t))}^{\frac{2q+2}{q+2}}+Ck^{\frac{2q+2}{q+2}}\mu(k)\\
				&\leq C\norm{\varrho^{(k)}}_{L^\infty_TL^2(\Gamma_k(t))}^{\frac{2q+2}{q+2}\frac{2q-2}{5q+4}}\norm{\varrho^{(k)}}_{L^2_TL^6(\Gamma_k(t))}^{\frac{2q+2}{q+2}\frac{3q+6}{5q+4}}\norm{\chi_{\Gamma_k(t)}}_{L^1_TL^{\frac{q+2}{q}}}^{\frac{2q+1}{5q+4}}+Ck^{\frac{2q+2}{q+2}}\mu(k)\\
				&\leq C|\varrho^{(k)}|_{Q_T}^{\frac{2q+2}{q+2}}\mu(k)^{\frac{2q+1}{5q+4}}+Ck^{\frac{2q+2}{q+2}}\mu(k)\\
				&\leq \frac{1}{2}|\varrho^{(k)}|_{Q_T}^{2}+C\mu(k)^{\frac{2q^2+5q+2}{5q+4}}+Ck^{\frac{2q+2}{q+2}}\mu(k)\\
				&\leq \frac{1}{2}|\varrho^{(k)}|_{Q_T}^{2}+Ck^2\mu(k),
			\end{split}
		\end{align}
         where \(\chi_{\Gamma_k(t)}\) is the indicator function of \(\Gamma_k(t)\).  Combining \eqref{cc1'} with \eqref{v norm'} yields \eqref{rho_k Q_T}, which completes the proof.
	\end{proof}
	
	We recall the following lemma from Ladyzhenskaya et al. \cite{Ladyzhenskaia} (Lemma 5.6, p.95); see also \cite{Haspot-2017} (Lemma 5.2) or \cite{Yu-Wu-2021} (Lemma 2.2).
	 \begin{lema}\label{Lem De}
		Assume that \(\Phi_n\), \(n=0,1,2,\dots\), is a non-negative sequence satisfying
		\begin{align*}
			\Phi_{n+1}\leq cb^n\Phi_n^{1+\eta},
		\end{align*}
		for some positive constants $c,\eta$, and $b>1$. If, in addition,
		\begin{align*}
			\Phi_0\leq \theta=c^{-1/\eta}b^{-1/\eta^2}, 
		\end{align*}
		then we have
		\begin{align*}
			\Phi_n\leq \theta b^{-n/\eta} \text{ and }\lim\limits_{n\to\infty}\Phi_n=0.
		\end{align*}
	\end{lema}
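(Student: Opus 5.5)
The plan is to prove by induction on $n$ the sharper statement
\begin{align*}
\Phi_n\le \theta b^{-n/\eta},\qquad n=0,1,2,\dots
\end{align*}
The limit $\lim_{n\to\infty}\Phi_n=0$ then follows at once: since $b>1$ and $\eta>0$, we have $b^{-n/\eta}\to0$, and $\Phi_n\ge0$.

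The base case $n=0$ is exactly the hypothesis $\Phi_0\le\theta$. For the inductive step, assume $\Phi_n\le\theta b^{-n/\eta}$. Because $t\mapsto t^{1+\eta}$ is increasing on $[0,\infty)$, the recursion gives
\begin{align*}
\Phi_{n+1}\le c\,b^n\Phi_n^{1+\eta}\le c\,b^n\theta^{1+\eta}b^{-n(1+\eta)/\eta}=c\,\theta^{\eta}\cdot\theta\, b^{\,n-n/\eta-n}=c\,\theta^\eta\cdot\theta\, b^{-n/\eta}.
\end{align*}
Substituting $\theta=c^{-1/\eta}b^{-1/\eta^2}$ gives $c\,\theta^\eta=c\cdot c^{-1}b^{-1/\eta}=b^{-1/\eta}$. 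Hence
\begin{align*}
\Phi_{n+1}\le\theta b^{-n/\eta}b^{-1/\eta}=\theta b^{-(n+1)/\eta},
\end{align*}
which closes the induction.

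No step here is a genuine obstacle. The only thing to check carefully is the exponent bookkeeping: the specific choice $\theta=c^{-1/\eta}b^{-1/\eta^2}$ is designed so that the factor $c\,\theta^\eta$ absorbs exactly one extra power $b^{-1/\eta}$, and the factor $b^n$ cancels against part of $b^{-n(1+\eta)/\eta}$. Monotonicity of the power function uses $\Phi_n\ge0$, which is assumed.
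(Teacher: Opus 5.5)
Your induction is correct and complete, and the exponent bookkeeping ($b^n\cdot b^{-n(1+\eta)/\eta}=b^{-n/\eta}$ and $c\,\theta^\eta=b^{-1/\eta}$) checks out. The paper gives no proof of its own; it cites Ladyzhenskaya et al.\ (Lemma 5.6), where the argument is also an induction. There one first iterates the recursion to an explicit bound for $\Phi_n$ in terms of $\Phi_0$ and then inserts $\Phi_0\le\theta$, whereas your direct induction on $\Phi_n\le\theta b^{-n/\eta}$ reaches the same conclusion a bit more quickly.
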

	With the above lemmas at hand, we can now establish an upper bound for the density.
	\begin{prop}\label{Prop R_T}
		There exists a constant \(C>0\) depending on \(T^*, q,\gamma, E_0, \|\varrho_0^{1/(q+2)}v_0\|_{L^{q+2}},\) and \(\norm{\varrho_0}_{L^\infty}\) such that 
		\begin{align}\label{3d RT}
			\sup_{0\leq t\leq T}\norm{\varrho}_{L^\infty}\leq C.
		\end{align}
	\end{prop}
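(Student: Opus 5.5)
The plan is to run a De Giorgi iteration on the levels $k_n = K(2-2^{-n})$, $n=0,1,2,\dots$, with $K\ge \max\{2,\norm{\varrho_0}_{L^\infty}\}$ to be fixed large. We set $\Phi_n := \mu(k_n)$. Since $\varrho_0\le K\le k_n$, we have $\varrho^{(k_n)}(\cdot,0)=0$, so Lemma \ref{Prop rho_k Q_T} applies without any initial contribution (the bound \eqref{v norm'} has no initial term for these levels). The goal is a recursive inequality $\Phi_{n+1}\le c\,b^n\Phi_n^{1+\eta}$, combined with smallness of $\Phi_0$ for $K$ large. Lemma \ref{Lem De} then gives $\mu(k_n)\to 0$, hence $\varrho\le 2K$ a.e. in $Q_T$.

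\textbf{Step 1 (recursion).} On $\Gamma_{k_{n+1}}(t)$ we have $\varrho^{(k_n)}\ge k_{n+1}-k_n = K2^{-n-1}$. Therefore
\[
\mathcal{L}(\Gamma_{k_{n+1}}(t)) \le (K2^{-n-1})^{-r}\int (\varrho^{(k_n)})^{r}\,dx
\]
for an exponent $r$ chosen below. Raising to the power $\frac{q}{q+2}$ and integrating in time, we bound $\int_0^T \norm{\varrho^{(k_n)}}_{L^r}^{rq/(q+2)}dt$ using the parabolic embedding: the norm $|\cdot|_{Q_T}$ controls $L^{s}_TL^{\ell}$ whenever $\frac{2}{s}+\frac{3}{\ell}=\frac32$ with $2\le \ell\le 6$. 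Choose $r=\ell$ and time exponent $s=\frac{rq}{q+2}$, or interpolate with the indicator $\chi_{\Gamma_{k_n}}$ through H\"older, exactly as in the display \eqref{cc1'}. This yields
\[
\Phi_{n+1}\le C\,(K2^{-n})^{-a}\,|\varrho^{(k_n)}|_{Q_T}^{a}\,\Phi_n^{\beta}
\]
with some $a>0$ and $\beta\ge 0$. Inserting Lemma \ref{Prop rho_k Q_T}, $|\varrho^{(k_n)}|_{Q_T}\le Ck_n\Phi_n^{1/2}\le 2CK\Phi_n^{1/2}$, the powers of $K$ cancel and we get $\Phi_{n+1}\le C\,2^{an}\,\Phi_n^{a/2+\beta}$. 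We need $\eta:=a/2+\beta-1>0$. The main obstacle is checking this exponent bookkeeping in 3D with the weight $\frac{q}{q+2}$ in $\mu$. We will choose $\ell$ close to $\frac{10}{3}$, the space-time symmetric exponent, so that $a=\frac{10}{3}\cdot\frac{q}{q+2}$ up to the H\"older split. This gives $a/2>1$ when $q>3$; for smaller $q$ we use the free H\"older factor $\Phi_n^\beta$ coming from the measure of $\Gamma_{k_n}$ to gain the needed excess, since $q\in(1,4)$ gives $\frac{q+2}{q}<3$ room. If necessary we will first enlarge $q$ slightly, which is allowed since \eqref{u1} is an open condition and Proposition \ref{Prop high v} holds for that $q$.

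\textbf{Step 2 (smallness of $\Phi_0$).} By Chebyshev and the energy bound \eqref{ene est}, $\mathcal{L}(\Gamma_K(t))\le K^{-\gamma}\int\varrho^\gamma dx\le CK^{-\gamma}$. Hence $\Phi_0=\mu(K)\le CT^*K^{-\gamma q/(q+2)}$, which tends to $0$ as $K\to\infty$. Since $c,b,\eta$ in Step 1 are independent of $K$, we can pick $K$ so large that $\Phi_0\le\theta=c^{-1/\eta}b^{-1/\eta^2}$. Lemma \ref{Lem De} then gives $\mu(2K)\le \lim_n\Phi_n=0$. Consequently $\varrho\le 2K$ a.e. on $Q_T$, and \eqref{3d RT} follows by continuity, with $C=2K$ depending only on the quantities listed.
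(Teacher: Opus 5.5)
\textbf{There is a genuine gap in Step~1, and your proposed fixes for it do not work.} Your framework is the paper's: levels $k_n=K(2-2^{-n})$ with $K\ge\max\{2,\|\varrho_0\|_{L^\infty}\}$, which is indeed what removes the initial term in Lemma~\ref{Prop rho_k Q_T}; Chebyshev plus the parabolic embedding of $|\cdot|_{Q_T}$; the bound $|\varrho^{(k_n)}|_{Q_T}\le Ck_n\mu(k_n)^{1/2}$; then Lemma~\ref{Lem De}. Step~2 is fine (the paper uses $\int\varrho\,dx$ instead of $\int\varrho^\gamma dx$, which is immaterial).

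Step~1 is the whole content, and the route you commit to does not give $\eta>0$ in the needed range. Suppose you apply Chebyshev at exponent $r$, H\"older against $\chi_{\Gamma_{k_n}}$ in a fixed space $L^\ell$, and then H\"older in time. You get $a=r\frac{q}{q+2}$ and at best $\beta=1-\frac{r}{\ell}$, so
\[
\eta=\tfrac a2+\beta-1\le r\Big(\tfrac{q}{2(q+2)}-\tfrac1\ell\Big).
\]
This is positive only if $\ell\frac{q}{q+2}>2$, however you split. The factor $\Phi_n^\beta$ gained from the indicator is paid for exactly by the power of $|\varrho^{(k_n)}|_{Q_T}$ lost when you lower $r$ below $\ell$.

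With $\ell\approx\frac{10}{3}$ the condition becomes $q>3$, and the indicator trick cannot rescue $q\le 3$. Enlarging $q$ is not available either. Condition \eqref{u1} is an \emph{upper} bound, $q<\frac{4}{\gamma-1}-2$, which forces $q<3$ for every $\gamma\ge\frac95$. So your argument as written fails for $\gamma\in[\frac95,\frac73)$.

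\textbf{The fix is the option you mention only in passing.} Keep $r=\ell$ and place the time exponent $s=\ell\frac{q}{q+2}$ on the parabolic line $\frac2s+\frac3\ell=\frac32$. This forces $\ell=\frac{10q+8}{3q}\in(4,6)$ and $s=\frac{10q+8}{3q+6}$, which is exactly the paper's \eqref{u3}: $\|\varrho^{(k_n)}\|_{L^s_TL^\ell}\le C|\varrho^{(k_n)}|_{Q_T}$.

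Chebyshev then gives $\mu(k_{n+1})\le (k_{n+1}-k_n)^{-s}\|\varrho^{(k_n)}\|_{L^s_TL^\ell}^{s}$ with no leftover measure factor. Combined with Lemma~\ref{Prop rho_k Q_T}, this yields $\mu(k_{n+1})\le C16^n\mu(k_n)^{s/2}$, where $\frac s2-1=\frac{2q-2}{3q+6}>0$ for every $q>1$.

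Your remark that $\frac{q+2}{q}<3$ is precisely the room that makes the iteration close. That room has to be spent by pushing the spatial exponent above $\frac{2(q+2)}{q}$, toward $6$. Lowering it to the space--time symmetric value $\frac{10}{3}$ does not work.
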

	\begin{proof}
		We denote that
		\begin{align*}
			\hat{k}_0=\max\{2,\norm{\varrho_0}_{L^\infty}\},\quad k_n=M_{T^*}\hat{k}_0(2-2^{-n}),
		\end{align*}
		where \(n\in\mathbb N\) and \(M_{T^*}>1\) is a sufficiently large constant to be determined, depending only on \(T^*, q,\gamma, E_0, \|\varrho_0^{1/(q+2)}v_0\|_{L^{q+2}},\) and \( \|\varrho_0\|_{L^\infty}\). 
		A direct calculation shows that
			\begin{align}\label{u3}
			\norm{\varrho^{(k_n)}}_{L^{\frac{10q+8}{3q+6}}_TL^{\frac{10q+8}{3q}}}\leq C\norm{\varrho^{(k_n)}}_{L^\infty_T L^2}^{\frac{2q-2}{5q+4}}\norm{\varrho^{(k_n)}}_{L^2_TL^6}^{\frac{3q+6}{5q+4}}\leq C|\varrho^{(k_n)}|_{Q_T}.
			\end{align}
		Using Chebyshev's inequality, \eqref{u3}, together with \eqref{rho_k Q_T}, one has
			\begin{align}\label{u3.5}
			\begin{split}
				\mu(k_{n+1})&=\int_0^T\mathcal{L}(\varrho-k_n\ge k_{n+1}-k_n)^{\frac{q}{q+2}}dt\\
				&\leq \frac{1}{(k_{n+1}-k_n)^{\frac{10q+8}{3q+6}}}\int_0^T\left(\int |\varrho^{(k_n)}|^{\frac{10q+8}{3q}}dx\right)^{\frac{q}{q+2}}dt\\
				&=  \frac{1}{(k_{n+1}-k_n)^{\frac{10q+8}{3q+6}}}\norm{\varrho^{(k_n)}}_{L^{\frac{10q+8}{3q+6}}_TL^{\frac{10q+8}{3q}}}^{\frac{10q+8}{3q+6}}\\
				&\leq \frac{C}{(k_{n+1}-k_n)^{\frac{10q+8}{3q+6}}}|\varrho^{(k_n)}|_{Q_T}^{\frac{10q+8}{3q+6}}\\
				&\leq \frac{Ck_n^{\frac{10q+8}{3q+6}}}{(k_{n+1}-k_n)^{\frac{10q+8}{3q+6}}}\mu(k_n)^{\frac{5q+4}{3q+6}}\\
				&\leq C2^{\frac{10q+8}{3q+6}n}\mu(k_n)^{\frac{5q+4}{3q+6}}\\
				&\leq C16^n\mu(k_n)^{\frac{5q+4}{3q+6}}.
			\end{split}
			\end{align}
		Denoting by \(C_0\) the constant \(C\) appearing in the last inequality of \eqref{u3.5}, we obtain the relation
			\begin{align}\label{u4}
			\mu(k_{n+1})\leq C_016^n\mu(k_n)^{1+\frac{2q-2}{3q+6}}, \text{ for all }n\in\mathbb{N}.
			\end{align}
			Employing $\eqref{ene est}$ yields 
        \begin{align}\label{u4.4}
            \begin{split}
                \mu(k_0) &= \mu(M_{T^*}\hat{k}_0) 
                = \int_0^T \mathcal{L}\bigl(\Gamma_{M_{T^*}\hat{k}_0}(t)\bigr)^{\frac{q}{q+2}} \, dt \\
                &\le \int_0^T \left( \int \frac{\varrho}{M_{T^*}\hat{k}_0} \, dx \right)^{\frac{q}{q+2}} dt \\
                &\le C M_{T^*}^{-\frac{q}{q+2}} \left( \int_0^T \int \varrho \, dx dt \right)^{\frac{q}{q+2}} \\
                &\le C M_{T^*}^{-\frac{q}{q+2}}.
            \end{split}
        \end{align}
        Denoting the constant \(C\) appearing in the last line of \eqref{u4.4} by \(C_1\), we obtain
        \begin{align}\label{u4.5}
            \mu(k_0) \le C_1 M_{T^*}^{-\frac{q}{q+2}}.
        \end{align}
			To apply Lemma \ref{Lem De} to the sequence $\{\mu(k_n)\}_{n\in\mathbb{N}}$, we choose \(M_{T^*}\) sufficiently large so that
			\begin{align}\label{u4.6}
				C_1M_{T^*}^{-\frac{q}{q+2}}\leq C_0^{-\frac{3q+6}{2q-2}}16^{-(\frac{3q+6}{2q-2})^2}.
			\end{align}
		Therefore, combining \eqref{u4}, \eqref{u4.5}, and \eqref{u4.6}, an application of Lemma \ref{Lem De} yields
			\begin{align*}
			\lim\limits_{n\to\infty}\mu(k_n)=0.
			\end{align*}
		This limit forces $\mu(2M_{T^*}\hat{k}_0)=0$, which in turn implies that
        \begin{align*}
			\sup_{0\leq t\leq T}\norm{\varrho}_{L^\infty}\leq 2M_{T^*}\hat{k}_0.
		\end{align*}
		 This completes the proof of Proposition \ref{Prop R_T}.
	\end{proof}
	
	\section{Positive lower bound for the density (3D)}
	In this section we prove that the density has a positive lower bound on \(\mathbb{T}^3\times(0,T^*)\), which is the most important and challenging part of ensuring that the local strong solution exists globally in time.  
	
	We begin with Lemma~\ref{Prop v L^p}, which describes how the \(L^p\)-integrability of the effective velocity grows with respect to \(p\).
	\begin{lema}\label{Prop v L^p}
		Under the assumption that $p>2$, there exists a constant \(\hat{C}\ge 1\) depending on \(T^*,q,\gamma,E_0,\norm{\varrho_0^{1/(q+2)}v_0}_{L^{q+2}},\norm{\varrho_0}_{L^\infty}\), and \(\norm{v_0}_{L^\infty}\) but independent of \(p\), such that
		\begin{align}\label{u9}
			\sup_{0\leq t\leq T}\norm{\varrho^{\frac{1}{p+2}}v}_{L^{p+2}}\leq \hat{C}\sqrt{p+2}.
		\end{align}
	\end{lema}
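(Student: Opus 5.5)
We repeat the energy computation from the proof of Proposition \ref{Prop high v}, now with a general exponent $p>2$, and keep careful track of how every constant depends on $p$. Multiplying \eqref{Equ2}$_2$ by $|v|^pv$ and integrating by parts (the convective term combines with the mass equation exactly as before, so no transport contribution survives), we get
\begin{align*}
\frac{1}{p+2}\frac{d}{dt}\int\varrho|v|^{p+2}dx+\int\varrho|v|^p|\nabla v|^2dx+p\int\varrho|v|^p|\nabla|v||^2dx=\int\varrho^\gamma\divg(|v|^pv)dx .
\end{align*}
The right-hand side is bounded by $\int\varrho^\gamma|v|^p(|\divg v|+p|\nabla|v||)dx$. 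The $\divg v$ piece is absorbed by $\frac12\int\varrho|v|^p|\nabla v|^2$. The second piece is absorbed by $\frac p2\int\varrho|v|^p|\nabla|v||^2$ via Young's inequality, which leaves $C\,p\int\varrho^{2\gamma-1}|v|^pdx$. The point is to split so that only one factor of $p$ appears. A naive split would give $p^2$ and destroy the $\sqrt{p}$ growth.

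Next we use the density upper bound from Proposition \ref{Prop R_T}, which gives $\varrho\le C$, hence $\varrho^{2\gamma-1}\le C\varrho$ since $\gamma\ge1$. Setting $y(t)=\int\varrho|v|^{p+2}dx$, Hölder's inequality with $\int\varrho\,dx=\int\varrho_0\,dx$ yields
\begin{align*}
\frac{d}{dt}y\le C(p+2)p\,y^{\frac{p}{p+2}}.
\end{align*}
Dividing by $y^{p/(p+2)}$ turns this into $\frac{d}{dt}y^{\frac{2}{p+2}}\le Cp$. Integrating over $(0,T)$ with $T<T^*$ gives
\begin{align*}
\norm{\varrho^{\frac{1}{p+2}}v}_{L^{p+2}}^2\le\norm{\varrho_0^{\frac1{p+2}}v_0}_{L^{p+2}}^2+CpT^*.
\end{align*}
The initial term is bounded by $\norm{v_0}_{L^\infty}^2\big(\int\varrho_0\big)^{2/(p+2)}\le C$, uniformly in $p$. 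Taking square roots gives \eqref{u9} with $\hat C$ independent of $p$.

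The main obstacle is the $p$-bookkeeping in the Young's inequality step. The coefficient $p$ in front of $\int\varrho^\gamma|v|^{p-1}v\cdot\nabla|v|$ must be matched against the good term $p\int\varrho|v|^p|\nabla|v||^2$, and only then does the remainder carry exactly one power of $p$. Once this is in place, the ODE argument produces linear growth of the squared norm, which is precisely the $\sqrt{p+2}$ rate claimed.
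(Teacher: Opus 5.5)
Your proof is correct and follows the paper's route: test \eqref{Equ2}$_2$ with $|v|^pv$, and split the pressure term with Young's inequality so that the remainder carries only one power of $p$. Then use the upper bound \eqref{3d RT} to replace $\varrho^{2\gamma-1}$ by $C\varrho$, and integrate the resulting differential inequality in time.

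The one difference is the H\"older step. The paper interpolates $\int\varrho|v|^p$ between the entropy bound $\int\varrho|v|^2\le C$ from \eqref{ent est} and $\int\varrho|v|^{p+2}$. That choice is where $p>2$ is used, and it yields $\frac{d}{dt}\norm{\varrho^{\frac{1}{p+2}}v}_{L^{p+2}}^{\frac{2(p+2)}{p}}\le C(p+1)$. You interpolate against the conserved mass $\int\varrho\,dx$ instead, which gives the cleaner $\frac{d}{dt}\norm{\varrho^{\frac{1}{p+2}}v}_{L^{p+2}}^{2}\le Cp$ directly and works for every $p>0$.
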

	\begin{rmk}
		Indeed, the estimate of \(\norm{\varrho^{\frac{1}{p+2}}v}_{L^\infty_TL^{p+2}}\) depends on \(p\), and as \(p\to\infty\) this \(p\)-dependent bound tends to infinity.  This dependence prevents us from using the method of Haspot \cite{Haspot-2017} to obtain a positive lower bound on the density.
	\end{rmk}
	\begin{proof}[Proof of Lemma \ref{Prop v L^p}]
		For any $p>2$, multiplying equation \(\eqref{Equ2}_2\) by \(|v|^{p}v\) and integrating the resulting equation over \(\mathbb{T}^3\), we use integration by parts and Young's inequality to obtain
			\begin{align}\label{u5}
			\begin{split}
				&\quad\frac{1}{p+2}\frac{d}{dt}\int \varrho |v|^{p+2}dx+\int \varrho |v|^p|\nabla v|^2dx+p\int \varrho |v|^p|\nabla |v||^2dx\\
				&=\int \varrho^\gamma\divg(|v|^pv)dx\\
				&=\int \varrho^\gamma|v|^p\div vdx+ p\int \varrho^\gamma|v|^{p-1}v\cdot\nabla |v|dx\\
				&\leq \frac{1}{2}\int \varrho |v|^p|\nabla v|^2dx+\frac{p}{2}\int \varrho |v|^p|\nabla |v||^2dx+C(p+1)\int \varrho^{2\gamma-1}|v|^pdx.
			\end{split}
		\end{align}
		Therefore, by \eqref{3d RT} and \eqref{ent est}, we obtain
		\begin{align*}
			\begin{split}
				\frac{1}{p+2}\frac{d}{dt}\int \varrho |v|^{p+2}dx&\leq C(p+1)\int \varrho^{2\gamma-1}|v|^pdx\\
				&\leq C(p+1)\int \varrho |v|^p dx\\
				&\leq C(p+1)\left(\int \varrho |v|^{2}dx\right)^{\frac{2}{p}}\left(\int \varrho |v|^{p+2}dx\right)^{\frac{p-2}{p}}\\
				&\leq C(p+1)\norm{\varrho^{\frac{1}{p+2}}v}_{L^{p+2}}^{\frac{(p-2)(p+2)}{p}},
			\end{split}
		\end{align*}
		which shows that
		\begin{align*}
			\frac{d}{dt}\norm{\varrho^{\frac{1}{p+2}}v}_{L^{p+2}}^{\frac{2(p+2)}{p}}\leq C(p+1).
		\end{align*}
		Integrating the above inequality with respect to time, we obtain
			\begin{align*}
			\sup_{0\leq t\leq T}\norm{\varrho^{\frac{1}{p+2}}v}_{L^{p+2}}&\leq \left(\norm{\varrho_0^{\frac{1}{p+2}}v_0}_{L^{p+2}}^{\frac{2(p+2)}{p}}+C(p+1)\right)^{\frac{p}{2(p+2)}}\\
			&\leq \norm{\varrho_0^{\frac{1}{p+2}}v_0}_{L^{p+2}}+C(p+1)^{\frac{p}{2(p+2)}}\\
			&\leq \norm{\varrho_0^{\frac{1}{2}}v_0}_{L^2}^{\frac{2}{p+2}}\norm{v_0}_{L^\infty}^{\frac{p}{p+2}}+C\sqrt{p+1}\\
			&\leq C\sqrt{p+2}.
			\end{align*}
		
		This completes the proof of Lemma \ref{Prop v L^p}.
	\end{proof}
	
	The following proposition is key to obtaining the positive lower bound on the density. It establishes a control relation between the effective velocity and the lower bound of the density.  We introduce the following notation:
	\begin{align}\label{u10}
		\tau:=\varrho^{-1},\quad \Theta_T:=\sup_{0\leq t\leq T}\norm{\tau}_{L^\infty}+e^{\frac{25}{9}}.
	\end{align}
	\begin{prop}\label{Prop v inf}
		There exists a constant \(C_v\ge 1\) depending on \(T^*,q,\gamma,E_0,\norm{\varrho_0^{1/(q+2)}v_0}_{L^{q+2}}\), \(\norm{\varrho_0}_{L^\infty}\), and \(\norm{v_0}_{L^\infty}\) such that
		\begin{align}\label{Cv}
			\sup_{0\leq t\leq T}\norm{v}_{L^\infty}\leq C_v\sqrt{\log \Theta_T}.
		\end{align}
	\end{prop}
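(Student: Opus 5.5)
Our plan is a Nash--Moser iteration on the $L^{p+2}$ energy inequality \eqref{u5}, tracking both the $p$-dependence and the dependence on $\tau=\varrho^{-1}$. Set $w_p=|v|^{\frac{p+2}{2}}$. Keeping the good terms in \eqref{u5} and using \eqref{3d RT}, we get
\begin{align*}
\frac{d}{dt}\int \varrho w_p^2\,dx+c\int \varrho|\nabla w_p|^2dx\le C(p+2)^2\int \varrho|v|^p\,dx .
\end{align*}
Since $\varrho\ge \Theta_T^{-1}$, we have $\int \varrho|\nabla w_p|^2\ge \Theta_T^{-1}\|\nabla w_p\|_{L^2}^2$. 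The Sobolev inequality then bounds $\|w_p\|_{L^6}^2$ by the dissipation times $\Theta_T$, plus a lower-order term. Each level therefore costs a factor $\Theta_T$. In the naive iteration $p_{k+1}+2=\chi(p_k+2)$, these factors multiply to $\Theta_T^{\sum\chi^{-k}}$, a power of $\Theta_T$, which is useless for the lower bound. The modification we intend is to iterate \emph{only} between $p_0\sim\log\Theta_T$ and $\infty$, and to start from Lemma \ref{Prop v L^p}.

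Concretely, fix $p_0+2=\log\Theta_T$; the constant $e^{25/9}$ in \eqref{u10} ensures $p_0>2$. Lemma \ref{Prop v L^p} gives
\begin{align*}
\sup_t\|\varrho^{\frac1{p_0+2}}v\|_{L^{p_0+2}}\le\hat C\sqrt{\log\Theta_T}.
\end{align*}
Because $\varrho^{-\frac1{p_0+2}}\le\Theta_T^{1/\log\Theta_T}=e$, this removes the weight at cost $e$:
\begin{align*}
\sup_t\|v\|_{L^{p_0+2}}\le e\hat C\sqrt{\log\Theta_T}.
\end{align*}
We then run the Moser scheme for $p_k+2=\chi^k(p_0+2)$ with $\chi>1$, say $\chi=5/3$ from the parabolic embedding $L^\infty_tL^2\cap L^2_tL^6\subset L^{10/3}_{t,x}$. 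Working on $Q_T$ with time cut-offs, each step has the form
\begin{align*}
\|v\|_{L^{p_{k+1}+2}(Q)}\le \big(C\Theta_T\,(p_k+2)^2\big)^{\frac{1}{p_k+2}}\,\|v\|_{L^{p_k+2}(Q)}^{\theta_k}.
\end{align*}
The factor is weighted by the right powers of $\varrho$, and $\theta_k\to1$ handles the lower-order source $\int\varrho|v|^p$. The total multiplicative constant is $\prod_k(C\Theta_T)^{\frac{1}{\chi^k\log\Theta_T}}\le e^{C'}$, because $\sum_k\chi^{-k}<\infty$ and $\Theta_T^{1/\log\Theta_T}=e$. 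The factor $\prod_k(p_k+2)^{2/(p_k+2)}$ is also bounded, since $\sum_k\frac{k\log\chi+\log\log\Theta_T}{\chi^k\log\Theta_T}$ is bounded. Passing $k\to\infty$ gives $\|v\|_{L^\infty(Q_T)}\le C\sqrt{\log\Theta_T}$. The initial data are handled because $\|v_0\|_{L^\infty}$ bounds every $L^p$ norm of $v_0$, so we may avoid cut-offs and use sup-in-time energy bounds directly.

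The source term $\int\varrho|v|^p$ must be absorbed at each level without extra powers of $\Theta_T$. We bound it by $\|\varrho\|_{L^\infty}$ and Hölder against $\int\varrho|v|^{p+2}$, which gives $\int\varrho|v|^p\le C\big(\int\varrho|v|^{p+2}\big)^{p/(p+2)}$. This yields a bound of the form $\max\{1,\cdot\}$ per step and contributes only bounded multiplicative factors.

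We expect the main obstacle to be the unweighting in the Sobolev step. The weighted dissipation $\int\varrho|\nabla w_p|^2$ controls $\|w_p\|_{L^6}$ only after losing $\Theta_T$, and the left-hand side of the energy inequality carries the weight $\varrho$. Converting $\int\varrho w^2$ back to $\int w^2$ at level $p_k$ loses $\Theta_T^{1/(p_k+2)}$ per norm, which is again summable. We will check carefully that every loss of $\Theta_T$ enters with exponent $O(\chi^{-k}/\log\Theta_T)$, so that the product stays bounded. If the $L^6$ embedding becomes awkward to weight, we would instead apply it to $\varrho^{1/2}w_p$, using $|\nabla\sqrt\varrho|$ controlled in $L^\infty_tL^2$ by \eqref{ene est}.
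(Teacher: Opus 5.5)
Your proposal is correct and is essentially the paper's proof. The paper likewise starts the Moser iteration ($r=5/3$, one factor $\Theta_T$ lost per step from unweighting) at the level $p+2=r^l$ with $l=\lfloor\log_r(\log\Theta_T)+1\rfloor$, and takes the starting bound $\hat C\sqrt{r^l}\le\hat C r^{1/2}\sqrt{\log\Theta_T}$ from Lemma \ref{Prop v L^p}. It then uses $\Theta_T^{\sum_{i>l}r^{-i}}\le\Theta_T^{\frac{3}{2\log\Theta_T}}=e^{3/2}$, keeping the weight $\varrho$ until the end instead of removing it at the first level as you do.

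One small slip: $\Theta_T\ge e^{25/9}$ only gives $p_0\ge 7/9$, not $p_0>2$ (the paper's $e^{25/9}=e^{r^2}$ is chosen so that $l\ge 3$ and $r^l\ge 125/27>4$). Start instead at $p_0+2=\max\{\log\Theta_T,5\}$, which still gives $\varrho^{-1/(p_0+2)}\le e$, or note that your mass-based H\"older bound makes Lemma \ref{Prop v L^p} valid for all $p>0$.
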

	\begin{proof}
		For any \(p>0\), multiplying equation \(\eqref{Equ2}_2\) by \(|v|^{p}v\), integrating the resulting equation over \(\mathbb{T}^3\), and using integration by parts, we obtain from \eqref{u5}
		\begin{align*}
			\begin{split}
				&\quad\frac{1}{p+2}\frac{d}{dt}\int \varrho |v|^{p+2}dx+\frac{1}{2}\int \varrho |v|^p|\nabla v|^2dx+\frac{p}{2}\int \varrho |v|^p|\nabla |v||^2dx\\
				&\leq C(p+1)\int \varrho^{2\gamma-1}|v|^pdx\\
				&\leq C (p+1)\int \varrho |v|^p dx\\
				&\leq C (p+1)\left(\int \varrho |v|^2dx\right)^{\frac{2}{p}}\left(\int \varrho |v|^{p+2}dx\right)^{\frac{p-2}{p}}\\
				&\leq C (p+1)\left(\int \varrho |v|^{p+2}dx\right)^{\frac{p-2}{p}},
			\end{split}
		\end{align*}
		where we have used \eqref{3d RT} and \eqref{ent est}. Therefore, integrating the above inequality with respect to time and using H\"{o}lder's inequality, we get
			\begin{align}\label{u6}
			\begin{split}
				&\quad\sup_{0\leq t\leq T}\frac{1}{p+2}\int \varrho |v|^{p+2}dx+\frac{1}{2}\int_{Q_T} \varrho |v|^p|\nabla v|^2dxdt+\frac{p}{2}\int_{Q_T} \varrho |v|^p|\nabla |v||^2dxdt\\
				&\leq C(p+1)\int_0^T\left(\int \varrho |v|^{p+2}dx\right)^{\frac{p-2}{p}}dt+\frac{2}{p+2}\int \varrho_0 |v_0|^{p+2}dx\\
				&\leq C(p+1)\left(\int_{Q_T} \varrho |v|^{p+2}dxdt\right)^{\frac{p-2}{p}}+\frac{2}{p+2}\int \varrho_0 |v_0|^{p+2}dx.
			\end{split}
			\end{align}

			Next, we establish a reverse H\"{o}lder's inequality.  From \eqref{3d RT}, \eqref{u6}, H\"{o}lder's inequality and the Sobolev embedding, we obtain
			\begin{align}\label{u7}
				\begin{split}
					&\quad\int_{Q_T} \varrho|v|^{\frac{5}{3}(p+2)}dxdt\\
					&\leq\int_0^T\left(\int \varrho^{\frac{3}{2}}|v|^{p+2}dx\right)^{\frac{2}{3}}\left(\int |v|^{3(p+2)}dx\right)^{\frac{1}{3}}dt \\
					&\leq C\sup_{0\leq t\leq T}\left(\int \varrho|v|^{p+2} dx\right)^{\frac{2}{3}}\left(\int_{Q_T} |\nabla |v|^{\frac{p+2}{2}}|^2dxdt+\int_{Q_T}|v|^{p+2}dxdt\right)\\
					&\leq C\Theta_T\left(\sup_{0\leq t\leq T}\int \varrho |v|^{p+2}dx+\int_{Q_T}\varrho|\nabla|v|^{\frac{p+2}{2}}|^2dxdt\right)^{\frac{5}{3}}\\
					&\leq C\Theta_T\left(C(p+2)^2\left(\int_{Q_T} \varrho |v|^{p+2}dxdt\right)^{\frac{p-2}{p}}+C\int \varrho_0 |v_0|^{p+2}dx\right)^{\frac{5}{3}}\\
					&\leq C\Theta_T(p+2)^{\frac{10}{3}}\left(\int_{Q_T}\varrho|v|^{p+2}dxdt\right)^{\frac{5}{3}}+C\Theta_T(p+2)^{\frac{10}{3}}+C\Theta_T\left(\int \varrho_0 |v_0|^{p+2}dx\right)^{\frac{5}{3}}.
				\end{split}
			\end{align}
			A direct calculation shows that
			\begin{align}\label{u7.1}
				\norm{\varrho_0^{\frac{1}{p+2}}v_0}_{L^{p+2}}\leq \norm{\varrho_0^{\frac{1}{2}}v_0}_{L^2}^{\frac{2}{p+2}}\norm{v_0}_{L^\infty}^{\frac{p}{p+2}}\leq \norm{\varrho_0^{\frac{1}{2}}v_0}_{L^2}+\norm{v_0}_{L^\infty}.
			\end{align}
			Denote the constant \(C\) in the last inequality of \eqref{u7} by \(C_2\), and assume without loss of generality that \(C_2\ge 1\).  For simplicity, we introduce the following notation. For any integer \(k \in \mathbb{N}\) with \(k > l\), where \(l\ge 3\) is a positive integer to be determined later, we define
			\begin{align*}
				\begin{split}
					&r=\frac{5}{3},\quad p_k+2=r^k,\quad C_3=\norm{\varrho_0^{\frac{1}{2}}v_0}_{L^2}+\norm{v_0}_{L^\infty}+1,\\
					&\Xi(k)=\int_{Q_T} \varrho|v|^{r^k}dxdt.
				\end{split}
			\end{align*}
			Substituting \(p\) with \(p_k\) in \(\eqref{u7}\), we use \eqref{u7.1}  to get
			\begin{align*}
				\begin{split}
					\Xi(k+1)\leq C_2\Theta_T(r^{\frac{10}{3}k}\Xi(k)^{r}+r^{\frac{10}{3}k}+C_3^{r^{k+1}}).
				\end{split}
			\end{align*}
			Let \(\tilde{\Xi}(k) = \max\{\Xi(k), C_3^{r^k}\}\). Then a direct calculation shows that
			\begin{align}\label{u7.9}
				\begin{split}
					\tilde{\Xi}(k+1)&=\max\{\Xi(k+1),C_3^{r^{k+1}} \}\\
					&\leq \max\{C_2\Theta_Tr^{\frac{10}{3}k}\Xi(k)^r+C_2\Theta_Tr^{\frac{10}{3}k}+C_2\Theta_TC_3^{r^{k+1}},C_3^{r^{k+1}}\}\\
					&\leq \max\{C_2\Theta_Tr^{\frac{10}{3}k}\tilde{\Xi}(k)^r+C_2\Theta_Tr^{\frac{10}{3}k}\tilde{\Xi}(k)^r+C_2\Theta_Tr^{\frac{10}{3}k}\tilde{\Xi}(k)^r,\tilde{\Xi}(k)^r\}\\
					&\leq 3C_2\Theta_Tr^{\frac{10}{3}k}\tilde{\Xi}(k)^r.
				\end{split}
			\end{align}
            Hence, it holds that
            \begin{align}\label{u8}
			\begin{split}
				\tilde{\Xi}(k+1)^{\frac{1}{r^{k+1}}}&\leq (3C_2\Theta_T)^{\frac{1}{r^{k+1}}}r^{\frac{10}{3}\frac{k}{r^{k+1}}}\tilde{\Xi}(k)^{\frac{1}{r^k}}\\
				&\leq (3C_2\Theta_T)^{\frac{1}{r^{k+1}}+\frac{1}{r^k}}r^{\frac{10}{3}(\frac{k}{r^{k+1}}+\frac{k-1}{r^{k}})}\tilde{\Xi}(k-1)^{\frac{1}{r^{k-1}}}\\
				&\leq \dots\\
				&\leq (3C_2\Theta_T)^{\sum_{i=l+1}^{k+1}r^{-i}}r^{\frac{10}{3}\sum_{i=l+1}^{k+1}(i-1)r^{-i}}\tilde{\Xi}(l)^{\frac{1}{r^l}}.
			\end{split}
		\end{align}

			Finally, we will choose an appropriate \(l\) as the starting point of the iteration and proceed by iteration using the reverse H\"{o}lder's inequality $\eqref{u8}$. We choose \(l\) as follows:
		\begin{align*}
			l = \bigl\lfloor \log_{r}(\log \Theta_T)+1 \bigr\rfloor,
		\end{align*}
		where \(\lfloor \cdot \rfloor\) denotes the floor function. From \(\eqref{u8}\) and \(\eqref{u9}\), we obtain
		\begin{align*}
			\begin{split}
				\tilde{\Xi}(k+1)^{\frac{1}{r^{k+1}}}&\leq(3C_2)^{\sum_{i=1}^{\infty}r^{-i}}r^{\frac{10}{3}\sum_{i=1}^{\infty}(i-1)r^{-i}}\Theta_T^{\sum_{i=l+1}^{\infty}r^{-i}}\tilde{\Xi}(l)^{\frac{1}{r^l}}\\
				&= (3C_2)^{\sum_{i=1}^{\infty}r^{-i}}r^{\frac{10}{3}\sum_{i=1}^{\infty}(i-1)r^{-i}}\Theta_T^{\frac{1}{r-1}r^{-l}}\max\{\Xi(l)^{\frac{1}{r^l}},C_3\}\\
				&\leq (3C_2)^{\sum_{i=1}^{\infty}r^{-i}}r^{\frac{10}{3}\sum_{i=1}^{\infty}(i-1)r^{-i}}\Theta_T^{\frac{1}{r-1}r^{-{\log_{r}(\log \Theta_T)}}}\max\{T^{\frac{1}{r^l}}\hat{C}r^{\frac{l}{2}},C_3\}\\
				&\leq (1+T^*)\hat{C}C_3(3C_2)^{\sum_{i=1}^{\infty}r^{-i}}r^{\frac{10}{3}\sum_{i=1}^{\infty}(i-1)r^{-i}}\Theta_T^{\frac{1}{r-1}r^{-{\log_{r}(\log \Theta_T)}}}r^{\frac{l}{2}}\\
				&\leq  (1+T^*)\hat{C}C_3(3C_2)^{\sum_{i=1}^{\infty}r^{-i}}r^{\frac{10}{3}\sum_{i=1}^{\infty}(i-1)r^{-i}}\Theta_T^{\frac{3}{2}\frac{1}{\log \Theta_T}}r^{\frac{\log_{r}(\log \Theta_T)+1}{2}}\\
				&\leq r^{\frac{1}{2}}(1+T^*)\hat{C}C_3(3C_2)^{\sum_{i=1}^{\infty}r^{-i}}r^{\frac{10}{3}\sum_{i=1}^{\infty}(i-1)r^{-i}}\Theta_T^{\frac{3}{2}\frac{1}{\log \Theta_T}}(\log \Theta_T)^{\frac{1}{2}}\\
				&\leq r^{\frac{1}{2}}(1+T^*)\hat{C}C_3(3C_2)^{\sum_{i=1}^{\infty}r^{-i}}r^{\frac{10}{3}\sum_{i=1}^{\infty}(i-1)r^{-i}}e^{\frac{3}{2}}\sqrt{\log \Theta_T}.
			\end{split}
		\end{align*}
		Combined with the definition of \(\tilde{\Xi}(\cdot)\), it shows that for any \(k \in \mathbb{N}^{+}\) with \(k > l\),
		\begin{align*}
			\left(\int_{Q_T} \varrho|v|^{r^{k+1}}dxdt\right)^{\frac{1}{r^{k+1}}}\leq r^{\frac{1}{2}}(1+T^*)\hat{C}C_3(3C_2)^{\sum_{i=1}^{\infty}r^{-i}}r^{\frac{10}{3}\sum_{i=1}^{\infty}(i-1)r^{-i}}e^{\frac{3}{2}}\sqrt{\log \Theta_T},
		\end{align*}
		which implies that
		\begin{align*}
			\left(\int_{Q_T} |v|^{r^{k+1}}dxdt\right)^{\frac{1}{r^{k+1}}}\leq \Theta_T^{\frac{1}{r^{k+1}}} r^{\frac{1}{2}}(1+T^*)\hat{C}C_3(3C_2)^{\sum_{i=1}^{\infty}r^{-i}}r^{\frac{10}{3}\sum_{i=1}^{\infty}(i-1)r^{-i}}e^{\frac{3}{2}}\sqrt{\log \Theta_T}.
		\end{align*}
		Letting \(k \to \infty\), we obtain
		\begin{align*}
			\sup_{0\leq t\leq T}\norm{v}_{L^\infty}\leq r^{\frac{1}{2}}(1+T^*)\hat{C}C_3(3C_2)^{\sum_{i=1}^{\infty}r^{-i}}r^{\frac{10}{3}\sum_{i=1}^{\infty}(i-1)r^{-i}}e^{\frac{3}{2}}\sqrt{\log \Theta_T}.
		\end{align*}
		
		Therefore, the proof of Proposition \ref{Prop v inf} is complete.
	\end{proof}

    We are now in a position to prove that the density has a positive lower bound.
	\begin{prop}\label{Prop A_T}
		There exists a constant \(C>0\) depending on \(T^*,q,\gamma,E_0,\norm{\varrho_0^{1/(q+2)}v_0}_{L^{q+2}}\), \(\norm{\varrho_0}_{L^\infty}, \norm{v_0}_{L^\infty}\), and \(\norm{\tau_0}_{L^\infty}\) such that
		\begin{align}\label{3d A_T}
			\sup_{0\leq t\leq T}\norm{\tau}_{L^\infty}\leq C.
		\end{align}
	\end{prop}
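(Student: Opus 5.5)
The plan is a parabolic Moser iteration for $\tau=\varrho^{-1}$, in which the effective velocity enters only through the logarithmic bound \eqref{Cv}, followed by a short-time bootstrap to close the estimate. I expect the closing step to be the real difficulty.

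\textbf{Equation for $\tau$.} From $\eqref{Equ2}_1$, using $\tau_t=-\varrho_t/\varrho^2$ and $\Delta\tau=-\Delta\varrho/\varrho^2+2|\nabla\varrho|^2/\varrho^3$, a direct computation gives
\begin{align*}
\tau_t-\Delta\tau+\frac{2|\nabla\tau|^2}{\tau}=\tau\,\divg v-v\cdot\nabla\tau=\divg(\tau v)-2v\cdot\nabla\tau .
\end{align*}
Multiply by $\tau^{p-1}$ with $p\ge 2$ and integrate by parts in the divergence term. The left side then gives $\frac1p\frac{d}{dt}\int\tau^p\,dx+(p+1)\int\tau^{p-2}|\nabla\tau|^2\,dx$. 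The right side equals $-(p+1)\int\tau^{p-1}v\cdot\nabla\tau\,dx$. By Young's inequality,
\begin{align*}
\frac{d}{dt}\int\tau^p\,dx+\int|\nabla\tau^{\frac p2}|^2\,dx\le Cp^2\int\tau^p|v|^2\,dx\le Cp^2C_v^2\log\Theta_T\int\tau^p\,dx,
\end{align*}
where the last step uses Proposition~\ref{Prop v inf}.

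\textbf{Moser iteration.} This is a De Giorgi/Moser-type inequality whose coefficient $\log\Theta_T$ is only logarithmic in the quantity being estimated. Running the standard Moser iteration on a time slab $[t_0,t_0+\delta]\subset[0,T]$ should give
\begin{align*}
\sup_{[t_0,t_0+\delta]}\norm{\tau}_{L^\infty}\le C\,(\log\Theta_T)^{a}\Big(\norm{\tau(t_0)}_{L^\infty}+\norm{\tau}_{L^2(\mathbb{T}^3\times[t_0,t_0+\delta])}\Big).
\end{align*}
The ingredients are: exponents $p_k=2r^k$; the Sobolev embedding $L^\infty_tL^2\cap L^2_tH^1\hookrightarrow L^{10/3}$ applied to $\tau^{p/2}$; and a cutoff in time only. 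No cutoff in space is needed on the torus, and the initial value at $t_0$ is kept. Here $a$ is a fixed exponent and $C$ is independent of $\Theta_T$, since the factors $p_k^2$ produce only convergent geometric products.

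\textbf{Closing the argument (main obstacle).} The low norm cannot be bounded globally by Gronwall. Gronwall with coefficient $\log\Theta_T$ only yields $\norm{\tau}_{L^2}\le C\,\Theta_T^{CT}$, a power of $\Theta_T$ that is useless unless it is small. I would therefore localize in time. Choose $\delta$, depending only on the constants in \eqref{Cv}, so small that $C C_v^2\delta\le\frac14$. Applying Gronwall on $[t_0,t_0+\delta]$ with $p=2$ gives
\begin{align*}
\norm{\tau}_{L^\infty_tL^2}\le\norm{\tau(t_0)}_{L^2}\,\Theta_T^{1/4}.
\end{align*}
Inserting this into the Moser bound, and using $(\log\Theta_T)^a\le C_\epsilon\Theta_T^{\epsilon}$, bounds $\sup_{[t_0,t_0+\delta]}\norm{\tau}_{L^\infty}$ by $C\,\Theta_T^{1/2}\,(1+\norm{\tau(t_0)}_{L^\infty})$.

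There is a subtlety here: $\Theta_T$ is the supremum over the whole interval $[0,T]$. To handle this, I would run the argument with $T$ replaced by $t_0+\delta$ and proceed inductively. Since \eqref{Cv} holds for every $T<T^*$, write $\Theta_j=\Theta_{j\delta}$. The bound on the $j$-th slab reads $\Theta_{j+1}\le C\,\Theta_{j+1}^{1/2}(1+\Theta_j)$, hence $\Theta_{j+1}\le C(1+\Theta_j)^2$.

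Only $\lceil T^*/\delta\rceil$ steps are needed, and this number is independent of $T$. Starting from $\Theta_0\le\norm{\tau_0}_{L^\infty}+e^{25/9}$, we obtain a bound on $\Theta_T$ depending only on the listed quantities, which is \eqref{3d A_T}. The delicate point is verifying that the Moser constant really grows only polynomially in $\log\Theta_T$, with $\delta$ fixed independently of $\Theta_T$. If that fails, the fallback is to use the weighted estimate of Lemma~\ref{Prop v L^p} with $s=\log\Theta_T$ in place of \eqref{Cv} on the low-norm step.
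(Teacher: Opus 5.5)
Your proposal is correct and follows essentially the paper's route. The paper also runs a Moser iteration for $\tau$ in which $\norm{v}_{L^\infty}^2\le C_v^2\log\Theta_T$ costs only a power of $\log\Theta_T$ (there $(\log\Theta_T)^{9/10}\le\Theta_T^{1/2}$), bounds the low norm by Gr\"onwall at the price $\Theta_T^{r^2TC_v^2}$, and chooses $T_0$ with $r^2T_0C_v^2\le\frac14$ so that $\Theta\le C\Theta^{3/4}$ closes; it then restarts finitely many times up to $T^*$. The only difference is cosmetic: the paper keeps the Moser iteration on all of $[0,T]$ starting from $\tau_0$ and restarts only the Gr\"onwall step at $T_0$, whereas you restart the whole iteration on each slab from $\tau(t_0)$.
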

	\begin{proof}
		\textbf{Step 1.} We claim that there exists a \(T_0 > 0\) sufficiently small, depending on \(T^*, q,\gamma, E_0, \|\varrho_0^{1/(q+2)}v_0\|_{L^{q+2}}, \|\varrho_0\|_{L^\infty},\) and \( \|v_0\|_{L^\infty}\); and a constant \(C > 0\) sufficiently large, depending on \(T^*, q,\gamma, E_0, \|\varrho_0^{1/(q+2)}v_0\|_{L^{q+2}}, \|\varrho_0\|_{L^\infty}, \|v_0\|_{L^\infty}\), and \(\|\tau_0\|_{L^\infty}\), such that
		\begin{align}
			\Theta_{T_0}\leq C.
		\end{align}
		
		From \(\eqref{Equ2}_1\), we know that \(\tau\) satisfies the following equation:
		\begin{align}\label{Equ A}
			\partial_t \tau-\Delta \tau +2\tau^{-1}|\nabla \tau|^2+v\cdot \nabla \tau-\tau\divg v=0.
		\end{align}
		For any \(p>0\), multiplying equation \(\eqref{Equ A}\) by \(\tau^{p+1}\), integrating the resulting equation over \(\mathbb{T}^3\), and using integration by parts and Young's inequality, we obtain that
			\begin{align}\label{u13}
			\begin{split}
				&\quad\frac{1}{p+2}\frac{d}{dt}\int \tau^{p+2}dx+(p+3)\int \tau^p|\nabla \tau|^2dx\\
				&\leq (p+3)\int |v||\nabla \tau|\tau^{p+1}dx\\
				&\leq \frac{p+3}{2}\int \tau^p|\nabla \tau|^2dx+(p+2)\int |v|^2\tau^{p+2}dx\\
				&\leq \frac{p+3}{2}\int \tau^p|\nabla \tau|^2dx+(p+2)\norm{v}_{L^\infty(Q_T)}^2\int \tau^{p+2}dx.
			\end{split}
		\end{align}
		Integrating the above inequality with respect to time yields
			\begin{align}\label{u11}
			\begin{split}
				&\quad\sup_{0\leq t\leq T}\frac{1}{p+2}\int \tau^{p+2}dx+\frac{p+3}{2}\int_{Q_T} \tau^p|\nabla \tau|^2dxdt\\
				&\leq 2(p+2)\norm{v}_{L^\infty(Q_T)}^2\int_{Q_T} \tau^{p+2}dxdt+\frac{2}{p+2}\int\tau_0^{p+2}dx.
			\end{split}
		\end{align}
		Next, we establish a reverse H\"{o}lder's inequality. Using \(\eqref{u11}\) and the Sobolev embedding, one has
			\begin{align}\label{u12}
			\begin{split}
				&\quad\int_{Q_T} \tau^{\frac{5}{3}(p+2)}dxdt\\
				&\leq\int_0^T\left(\int \tau^{p+2}dx\right)^{\frac{2}{3}}\left(\int \tau^{3(p+2)}dx\right)^{\frac{1}{3}}dt \\
				&\leq C\sup_{0\leq t\leq T}\left(\int \tau^{p+2} dx\right)^{\frac{2}{3}}\left(\int_{Q_T} |\nabla \tau^{\frac{p+2}{2}}|^2dxdt+\int_{Q_T}\tau^{p+2}dxdt\right)\\
				&\leq C\left(\sup_{0\leq t\leq T}\int \tau^{p+2}dx+\int_{Q_T} |\nabla\tau^{\frac{p+2}{2}}|^2dxdt\right)^{\frac{5}{3}}\\
				&\leq C\left((p+2)^2\norm{v}_{L^\infty(Q_T)}^2\int_{Q_T}\tau^{p+2}dxdt+\norm{\tau_0}_{L^{p+2}}^{p+2}\right)^{\frac{5}{3}}\\
				&\leq C(p+2)^{\frac{10}{3}}\norm{v}_{L^\infty(Q_T)}^{\frac{10}{3}}\left(\int_{Q_T}\tau^{p+2}dxdt\right)^{\frac{5}{3}}+C\norm{\tau_0}_{L^{p+2}}^{\frac{5}{3}(p+2)}\\
				&\leq C(p+2)^{\frac{10}{3}}\norm{v}_{L^\infty(Q_T)}^{\frac{10}{3}}\left(\int_{Q_T}\tau^{p+2}dxdt\right)^{\frac{5}{3}}+C(\norm{\tau_0}_{L^2}+\norm{\tau_0}_{L^\infty})^{\frac{5}{3}(p+2)}.
			\end{split}
			\end{align}
		We denote the constant \(C\) in the last inequality of $\eqref{u12}$ by \(C_4\). Without loss of generality, it may be assumed that \(C_4 \ge 1\). We introduce the following notation for any \(k \in \mathbb{N}^+\) with \(k \ge 2\):
		\begin{align*}
			\begin{split}
				&r = \frac{5}{3}, \quad p_k + 2 = r^k, \quad C_5 = \|\tau_0\|_{L^2} + \|\tau_0\|_{L^\infty} + 1, \\
				&\Upsilon(k) = \int_{Q_T} \tau^{r^k} \, dxdt.
			\end{split}
		\end{align*}
		Substituting \(p\) with \(p_k\) in \(\eqref{u12}\), we obtain
		\begin{align}\label{u12.1}
			\begin{split}
				\Upsilon(k+1)\leq C_4r^{\frac{10}{3}k}\norm{v}_{L^\infty(Q_T)}^{\frac{10}{3}}\Upsilon(k)^r+C_4C_5^{r^{k+1}}.
			\end{split}
		\end{align}
		Let \(\tilde{\Upsilon}(k) = \max\{\Upsilon(k), C_5^{r^k}\}\). Applying \(\eqref{Cv}\) and $\eqref{u12.1}$ yields that
		\begin{align*}
			\begin{split}
				\tilde{\Upsilon}(k+1)&=\max\{\Upsilon(k+1),C_5^{r^{k+1}}\}\\
				&\leq \max\{C_4r^{\frac{10}{3}k}\norm{v}_{L^\infty(Q_T)}^{\frac{10}{3}}\Upsilon(k)^r+C_4C_5^{r^{k+1}},C_5^{r^{k+1}}\}\\
				&\leq \max\{C_4r^{\frac{10}{3}k}\norm{v}_{L^\infty(Q_T)}^{\frac{10}{3}}\tilde{\Upsilon}(k)^r+C_4\tilde{\Upsilon}(k)^r,\tilde{\Upsilon}(k)^r\}\\
				&\leq C_4r^{\frac{10}{3}k}(\norm{v}_{L^\infty(Q_T)}^{\frac{10}{3}}+1)\tilde{\Upsilon}(k)^r\\
				&\leq 2C_4C_v^{\frac{10}{3}}r^{\frac{10}{3}k}(\log \Theta_T)^{\frac{5}{3}}\tilde{\Upsilon}(k)^r,
			\end{split}
		\end{align*}
		which implies that
		\begin{align}\label{u16}
			\begin{split}
				\tilde{\Upsilon}(k+1)^{\frac{1}{r^{k+1}}}&\leq (2C_4C_v^{\frac{10}{3}})^{\frac{1}{r^{k+1}}}r^{\frac{10}{3}\frac{k}{r^{k+1}}}(\log \Theta_T)^{\frac{5}{3}\frac{1}{r^{k+1}}}\tilde{\Upsilon}(k)^{\frac{1}{r^k}}\\
				&\leq (2C_4C_v^{\frac{10}{3}})^{\frac{1}{r^{k+1}}+\frac{1}{r^k}}r^{\frac{10}{3}(\frac{k}{r^{k+1}}+\frac{k-1}{r^k})}(\log \Theta_T)^{\frac{5}{3}(\frac{1}{r^{k+1}}+\frac{1}{r^k})}\tilde{\Upsilon}(k-1)^{\frac{1}{r^{k-1}}}\\
				&\leq \dots\\
				&\leq (2C_4C_v^{\frac{10}{3}})^{\sum_{i=3}^{k+1}r^{-i}}r^{\frac{10}{3}\sum_{i=3}^{k+1}r^{-i}(i-1)}(\log \Theta_T)^{\frac{5}{3}\sum_{i=3}^{k+1}r^{-i}}\tilde{\Upsilon}(2)^{\frac{1}{r^2}}\\
				&\leq (2C_4C_v^{\frac{10}{3}})^{\sum_{i=1}^{\infty}r^{-i}}r^{\frac{10}{3}\sum_{i=1}^{\infty}r^{-i}(i-1)}(\log \Theta_T)^{\frac{9}{10}}\tilde{\Upsilon}(2)^{\frac{1}{r^2}}.
			\end{split}
		\end{align}
		Next, we estimate \(\tilde{\Upsilon}(2)^{\frac{1}{r^2}}\). This requires estimating \(\Upsilon(2)^{\frac{1}{r^2}}\). Substituting \(p = \frac{7}{9}\) into \(\eqref{u13}\) gives
		\begin{align}\label{u19}
			\begin{split}
				\frac{d}{dt}\int \tau^{r^2}dx\leq r^4\norm{v}_{L^\infty(Q_T)}^2\int \tau^{r^2}dx,
			\end{split}
		\end{align}
		 which together with Gr\"{o}nwall's inequality yields
		 \begin{align}\label{u14}
		 	\begin{split}
		 		\sup_{0\leq t\leq T}\int \tau^{r^2}dx\leq \exp\{r^4T\norm{v}_{L^\infty(Q_T)}^2\}\int \tau_0^{r^2}dx.
		 	\end{split}
		 \end{align}
		 Using \(\eqref{u14}\) together with \(\eqref{Cv}\), we obtain
		 \begin{align}\label{u15}
		 	\begin{split}
		 		\Upsilon(2)^{\frac{1}{r^2}}&=\left(\int_{Q_T}\tau^{r^2}dxdt\right)^{\frac{1}{r^2}}\\
		 		&\leq T^{\frac{1}{r^2}}\left(\sup_{0\leq t\leq T}\int \tau^{r^2}dx\right)^{\frac{1}{r^2}}\\
		 		&\leq (T^*)^{\frac{1}{r^2}}\exp\{r^2T\norm{v}_{L^\infty(Q_T)}^2\}\left(\int \tau_0^{r^2}dx\right)^{\frac{1}{r^2}}\\
		 		&\leq (T^*)^{\frac{1}{r^2}}\exp\{r^2TC_v^2\log \Theta_T\}\norm{\tau_0}_{L^{r^2}}\\
		 		&= (T^*)^{\frac{1}{r^2}}\Theta_T^{r^2TC_v^2}\norm{\tau_0}_{L^{r^2}}.
		 	\end{split}
		 \end{align}
		 Substituting \eqref{u15} into \eqref{u16} yields
		 \begin{align}\label{u17}
		 	\begin{split}
                \tilde{\Upsilon}(k+1)^{\frac{1}{r^{k+1}}}&\leq (2C_4C_v^{\frac{10}{3}})^{\sum_{i=1}^{\infty}r^{-i}}r^{\frac{10}{3}\sum_{i=1}^{\infty}r^{-i}(i-1)}(\log \Theta_T)^{\frac{9}{10}}\tilde{\Upsilon}(2)^{\frac{1}{r^2}}\\
		 		&\leq (2C_4C_v^{\frac{10}{3}})^{\sum_{i=1}^{\infty}r^{-i}}r^{\frac{10}{3}\sum_{i=1}^{\infty}r^{-i}(i-1)}\Theta_T^{\frac{1}{2}}\max\{(T^*)^{\frac{1}{r^2}}\Theta_T^{r^2TC_v^2}\norm{\tau_0}_{L^{r^2}},C_5\}\\
		 		&\leq (1+(T^*)^{\frac{1}{r^2}})C_5(2C_4C_v^{\frac{10}{3}})^{\sum_{i=1}^{\infty}r^{-i}}r^{\frac{10}{3}\sum_{i=1}^{\infty}r^{-i}(i-1)}(\norm{\tau_0}_{L^{r^2}}+1)\Theta_T^{\frac{1}{2}+r^2TC_v^2}.
		 	\end{split}
		 \end{align}
		 Let \(C_6\) be a constant depending on \(T^*, q,\gamma, E_0, \|\varrho_0^{1/(q+2)}v_0\|_{L^{q+2}}, \|\varrho_0\|_{L^\infty}, \|v_0\|_{L^\infty}\), and \(\|\tau_0\|_{L^\infty}\), defined by
		 \begin{align*}
		 	C_6 :=\bigl(1+(T^*)^{\frac{1}{r^2}}\bigr) C_5 \bigl(2C_4C_v^{\frac{10}{3}}\bigr)^{\sum_{i=1}^{\infty} r^{-i}} 
		 	r^{\frac{10}{3} \sum_{i=1}^{\infty} r^{-i}(i-1)} \bigl(\|\tau_0\|_{L^{r^2}} + 1\bigr).
		 \end{align*}
		 Inequality \(\eqref{u17}\) shows that for any integer $k\ge 2$,
		 \begin{align*}
		 	\begin{split}
		 		\left(\int_{Q_T}\tau^{r^{k+1}}dxdt\right)^{\frac{1}{r^{k+1}}}\leq C_6 \Theta_T^{\frac{1}{2}+r^2TC_v^2}.
		 	\end{split}
		 \end{align*}
		 Letting \(k \to \infty\), we obtain
		 \begin{align*}
		 	\sup_{0\leq t\leq T}\norm{\tau}_{L^\infty}\leq C_6 \Theta_T^{\frac{1}{2}+r^2TC_v^2},
		 \end{align*}
		 which, together with the definition of \(\Theta_T\) in $\eqref{u10}$, implies that
		\begin{align}\label{u18}
		\Theta_T\leq (C_6+e^{\frac{25}{9}})\Theta_T^{\frac{1}{2}+r^2TC_v^2}.
		\end{align}
		We choose \(T_0 > 0\) sufficiently small, depending on \(T^*, q,\gamma, E_0, \|\varrho_0^{1/(q+2)}v_0\|_{L^{q+2}}, \|\varrho_0\|_{L^\infty}\), and \(\|v_0\|_{L^\infty}\) such that
		\begin{align}\label{u22}
			r^2 T_0 C_v^2 \le \frac{1}{4}.
		\end{align}
		Hence, from \(\eqref{u18}\) and $\eqref{u22}$ we deduce that
		\begin{align*}
			\Theta_{T_0} &\le (C_6 + e^{\frac{25}{9}}) \Theta_{T_0}^{\frac{3}{4}} \\
			&\le \frac{3}{4} \Theta_{T_0} + \frac{1}{4} (C_6 + e^{\frac{25}{9}})^4,
		\end{align*}
		which implies that
		\begin{align*}
			\Theta_{T_0} \le (C_6 + e^{\frac{25}{9}})^4.
		\end{align*}
		
		\textbf{Step 2.} Without loss of generality, we assume \(2T_0 < T^*\). We claim that there exists a constant \(C > 0\) sufficiently large, depending on \(T^*, q, \gamma, E_0, \|\varrho_0^{1/(q+2)}v_0\|_{L^{q+2}}, \|\varrho_0\|_{L^\infty}, \|v_0\|_{L^\infty}\), and \(\|\tau_0\|_{L^\infty}\), such that
		\begin{align*}
			\Theta_{2T_0}\leq C.
		\end{align*}
		
		For any \(T \in (T_0, 2T_0)\), we deduce from \(\eqref{u19}\) that
		\begin{align}\label{u20}
			\begin{split}
				\sup_{T_0\leq t\leq T}\int \tau^{r^2}dx&\leq \exp\{r^4(T-T_0)\norm{v}_{L^\infty(Q_T)}^2\}\int \tau(T_0)^{r^2}dx\\
				&\leq \exp\{r^4T_0\norm{v}_{L^\infty(Q_T)}^2\}\int \tau(T_0)^{r^2}dx.
			\end{split}
		\end{align}
		Combining \(\eqref{u20}\), \(\eqref{Cv}\), and \(\eqref{u22}\), we get
		\begin{align}\label{u21}
			\begin{split}
				\Upsilon(2)^{\frac{1}{r^2}}&=\left(\int_{Q_T}\tau^{r^2}dxdt\right)^{\frac{1}{r^2}}\\
				&\leq T^{\frac{1}{r^2}}\left(\sup_{0\leq t\leq T_0}\int \tau^{r^2}dx+\sup_{T_0\leq t\leq T}\int \tau^{r^2}dx\right)^{\frac{1}{r^2}}\\
				&\leq (T^*)^{\frac{1}{r^2}}\sup_{0\leq t\leq T_0}\left(\int \tau^{    r^2}dx\right)^{\frac{1}{r^2}}+(T^*)^{\frac{1}{r^2}}\sup_{T_0\leq t\leq T}\left(\int \tau^{r^2}dx\right)^{\frac{1}{r^2}}\\
				&\leq (T^*)^{\frac{1}{r^2}}\Theta_{T_0}+(T^*)^{\frac{1}{r^2}}\exp\{r^2T_0C_v^2\log \Theta_T\}\norm{\tau(T_0)}_{L^{r^2}}\\
				&\leq (T^*)^{\frac{1}{r^2}}\Theta_{T_0}+(T^*)^{\frac{1}{r^2}}\Theta_T^{\frac{1}{4}}\norm{\tau(T_0)}_{L^{r^2}}.
			\end{split}
		\end{align}	
		It follows from \(\eqref{u16}\) and \(\eqref{u21}\) that for any $k\ge 2$,
		\begin{align}\label{u23}
			\begin{split}
				&\quad\left(\int_{Q_T}\tau^{r^{k+1}}dxdt\right)^{\frac{1}{r^{k+1}}}\leq\tilde{\Upsilon}(k+1)^{\frac{1}{r^{k+1}}}\\
				&\leq (2C_4C_v^{\frac{10}{3}})^{\sum_{i=1}^{\infty}r^{-i}}r^{\frac{10}{3}\sum_{i=1}^{\infty}r^{-i}(i-1)}(\log \Theta_T)^{\frac{9}{10}}\tilde{\Upsilon}(2)^{\frac{1}{r^2}}\\
				&\leq (2C_4C_v^{\frac{10}{3}})^{\sum_{i=1}^{\infty}r^{-i}}r^{\frac{10}{3}\sum_{i=1}^{\infty}r^{-i}(i-1)}\Theta_T^{\frac{1}{2}}\max\{(T^*)^{\frac{1}{r^2}}\Theta_{T_0}+(T^*)^{\frac{1}{r^2}}\Theta_T^{\frac{1}{4}}\norm{\tau(T_0)}_{L^{r^2}},C_5\}\\
				&\leq 2C_5((1+(T^*)^{\frac{1}{r^2}})\Theta_{T_0}(1+\norm{\tau(T_0)}_{L^{r^2}}))(2C_4C_v^{\frac{10}{3}})^{\sum_{i=1}^{\infty}r^{-i}}r^{\frac{10}{3}\sum_{i=1}^{\infty}r^{-i}(i-1)}\Theta_T^{\frac{3}{4}}.
			\end{split}
		\end{align}
		Let \(C_7\) be a constant depending on \(T^*, q,\gamma, E_0, \|\varrho_0^{1/(q+2)}v_0\|_{L^{q+2}}, \|\varrho_0\|_{L^\infty}, \|v_0\|_{L^\infty}\), and \(\|\tau_0\|_{L^\infty}\), defined by
		\begin{align*}
			C_7 := 2C_5\Bigl((1+(T^*)^{\frac{1}{r^2}}) \Theta_{T_0} \bigl(1+\|\tau(T_0)\|_{L^{r^2}}\bigr)\Bigr) 
			\bigl(2C_4 C_v^{\frac{10}{3}}\bigr)^{\sum_{i=1}^{\infty} r^{-i}} 
			r^{\frac{10}{3} \sum_{i=1}^{\infty} r^{-i}(i-1)}.
		\end{align*}
		Letting \(k \to \infty\) in \(\eqref{u23}\), we obtain
		\begin{align*}
				\sup_{0\leq t\leq T}\norm{\tau}_{L^\infty}\leq C_7 \Theta_T^{\frac{3}{4}},
		\end{align*}
		which, combined with the definition of \(\Theta_T\), implies that
		\begin{align*}
			\Theta_T\leq (C_7+e^{\frac{25}{9}})\Theta_T^{\frac{3}{4}}.
		\end{align*}
		Hence, applying Young’s inequality and using the arbitrariness of \(T\), one has
		\begin{align*}
			\Theta_{2T_0}\leq (C_7+e^{\frac{25}{9}})^4.
		\end{align*}
		
		\textbf{Step 3.} There exists some \(j \in \mathbb{N}^+\) such that
		\begin{align*}
			jT_0 \le T^* < (j+1)T_0, 
		\end{align*}
		Repeating the argument in Step 2, we can prove that there exists a constant \(C > 0\) depending on \(T^*, q, \gamma, E_0, \|\varrho_0^{1/(q+2)}v_0\|_{L^{q+2}}, \|\varrho_0\|_{L^\infty}, \|v_0\|_{L^\infty}\), and \(\|\tau_0\|_{L^\infty}\) such that
		\begin{align*}
			\Theta_{T_0}\leq C,\quad \Theta_{2T_0}\leq C, \dots,\Theta_{jT_0}\leq C,\quad \Theta_{T}\leq C,\quad \forall T\in (j T_0, T^*).
		\end{align*}
		
		Therefore, the proof of Proposition \ref{Prop A_T} is complete.
	\end{proof}

	\section{High-order estimates (3D)}
	After obtaining both the upper bound and the positive lower bound for the density, we can use the parabolic structure of system \(\eqref{Equ2}\) to derive estimates for the higher-order derivatives of \((\varrho, v)\), which then yield estimates for the higher-order derivatives of \((\varrho, u)\) in the original system \(\eqref{Equ1}\). 
	\begin{prop}\label{Prop H1}
		There exists a constant \(C>0\) depending on \(T^*,\gamma, \norm{v_0}_{L^\infty}\),\(\norm{\tau_0}_{L^\infty},\norm{\varrho_0}_{H^2}\) and \(\norm{v_0}_{H^1}\) such that 
		\begin{align}\label{h1}
			\sup_{0\leq t\leq T}\Big(\norm{\varrho}_{H^2}+\norm{\varrho_t}_{L^2}+\norm{v}_{H^1}\Big)+\int_0^T\Big(\norm{\varrho}_{H^3}^2+\norm{\varrho_t}_{H^1}^2+\norm{v}_{H^2}^2+\norm{v_t}_{L^2}^2\Big)dt\leq C.
		\end{align}
	\end{prop}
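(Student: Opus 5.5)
We work on the parabolic system \eqref{Equ2}, using the two-sided density bounds from Propositions \ref{Prop R_T} and \ref{Prop A_T}, $C^{-1}\le\varrho\le C$, and the bound $\norm{v}_{L^\infty(Q_T)}\le C$ from Proposition \ref{Prop v inf}, since $\Theta_T$ is now bounded. With $\varrho$ bounded above and below, \eqref{Equ2}$_2$ becomes uniformly parabolic:
\begin{align*}
v_t-\tau\div(\varrho\nabla v)=-(v-\nabla\log\varrho)\cdot\nabla v-\tau\nabla P .
\end{align*}
The main difficulty is the drift term $\nabla\log\varrho\cdot\nabla v$, which couples the two equations at first-derivative level. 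So I would close an estimate for $\norm{\nabla\varrho}_{L^\infty_TL^2}$ and $\norm{\nabla v}_{L^\infty_TL^2}$ together.

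\textbf{Step 1 (first derivatives of $\varrho$).} Test \eqref{Equ2}$_1$ with $-\Delta\varrho$. Since $\div(\varrho v)=v\cdot\nabla\varrho+\varrho\div v$ and $v$ is bounded,
\begin{align*}
\frac{d}{dt}\norm{\nabla\varrho}_{L^2}^2+\norm{\Delta\varrho}_{L^2}^2\le C\norm{\nabla\varrho}_{L^2}^2+C\norm{\nabla v}_{L^2}^2 .
\end{align*}
The term $\int_{Q_T}|\nabla v|^2$ is already controlled by \eqref{ent est} together with the lower bound on $\varrho$. This gives $\varrho\in L^\infty_TH^1\cap L^2_TH^2$. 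Testing with $\Delta^2\varrho$ (equivalently, differentiating once more) gives
\begin{align*}
\frac{d}{dt}\norm{\Delta\varrho}_{L^2}^2+\norm{\nabla\Delta\varrho}_{L^2}^2\le C\norm{\nabla(v\cdot\nabla\varrho+\varrho\div v)}_{L^2}^2 .
\end{align*}
This requires $\norm{\nabla^2 v}_{L^2}$ and the products $|\nabla v||\nabla\varrho|$, so it must be coupled with Step 2.

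\textbf{Step 2 (energy estimate for $\nabla v$).} Multiply the momentum equation of \eqref{Equ2}$_2$ by $v_t$ and also by $-\div(\varrho\nabla v)$. This gives
\begin{align*}
\frac{d}{dt}\int\varrho|\nabla v|^2+\norm{v_t}_{L^2}^2+\norm{\nabla^2v}_{L^2}^2\lesssim \int|\nabla\varrho|^2|\nabla v|^2+\int|\nabla\varrho|^2+\text{l.o.t.},
\end{align*}
where the lower-order terms come from $\varrho_t|\nabla v|^2$, which is handled through $\varrho_t=\Delta\varrho-\div(\varrho v)$. Elliptic regularity for $\div(\varrho\nabla v)$ with coefficient $\varrho\in H^1$ is used here. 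The critical quantity is $\int|\nabla\varrho|^2|\nabla v|^2$. In 3D I would bound it by $\norm{\nabla\varrho}_{L^4}^2\norm{\nabla v}_{L^4}^2$ and apply Gagliardo–Nirenberg:
\begin{align*}
\norm{\nabla v}_{L^4}^2\le C\norm{\nabla v}_{L^2}^{1/2}\norm{\nabla^2 v}_{L^2}^{3/2}+C\norm{\nabla v}_{L^2}^2,\qquad \norm{\nabla\varrho}_{L^4}^2\le C\norm{\nabla\varrho}_{L^2}^{1/2}\norm{\nabla^2\varrho}_{L^2}^{3/2}+C .
\end{align*}
Young's inequality then yields a bound of the form $\epsilon\norm{\nabla^2v}^2_{L^2}+C\norm{\nabla^2\varrho}_{L^2}^{6}\norm{\nabla v}^2_{L^2}$. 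This closes by Gronwall only if $\norm{\nabla^2\varrho}_{L^2}^{6}$ is time-integrable, which it is not yet.

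\textbf{Step 3 (closing the coupled estimate).} To close, I would first get $\nabla\varrho\in L^\infty_TL^4$ from the $\varrho$-equation alone, using $L^p$ maximal regularity for the heat equation with right-hand side $\div(\varrho v)\in L^\infty$. This gives $\nabla\varrho\in L^r(Q_T)$ for every $r<\infty$, since $\varrho v\in L^\infty(Q_T)$, and in particular $\nabla\varrho\in L^{10}(Q_T)$. Then
\begin{align*}
\int|\nabla\varrho|^2|\nabla v|^2\le\norm{\nabla\varrho}_{L^{10}}^2\norm{\nabla v}_{L^{5/2}}^2 ,
\end{align*}
and interpolation handles $\norm{\nabla v}_{L^{5/2}}$. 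Alternatively one can simply use Gronwall with the integrable factor $\norm{\nabla\varrho}_{L^{r}}^{s}$. Once $\sup_t\norm{\nabla v}_{L^2}$ and $\int_0^T\norm{v}_{H^2}^2$ are bounded, Step 1's second-order estimate gives $\varrho\in L^\infty_TH^2\cap L^2_TH^3$. The bounds on $\varrho_t$ follow from $\varrho_t=\Delta\varrho-\div(\varrho v)$, and $v_t\in L^2_TL^2$ was already obtained in Step 2. I expect the main obstacle to be this cross term and the exact choice of integrability exponents in Step 3.
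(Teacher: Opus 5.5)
Your argument is correct, but it closes the estimate by a different mechanism than the paper. The paper never seeks extra integrability of $\nabla\varrho$. It couples the estimate from testing \eqref{Equ3}$_1$ with $\Delta\varrho_t$ (which controls $\sup_t\norm{\Delta\varrho}_{L^2}$ and $\int_0^T\norm{\nabla\Delta\varrho}_{L^2}^2dt$) with the estimate from testing \eqref{Equ3}$_2$ with $\Delta v$. It then removes the critical term by a single integration by parts,
\[
\int|\nabla\varrho|^2|\nabla v|^2dx=-2\int\partial_{ij}\varrho\,\partial_i\varrho\,v_k\,\partial_jv_k\,dx-\int|\nabla\varrho|^2\,v\cdot\Delta v\,dx .
\]
After this step it only needs $\norm{v}_{L^\infty}$, the energy bound on $\norm{\nabla\varrho}_{L^2}$, and $\norm{\nabla\varrho}_{L^4}^4\le C\int\varrho|\nabla\varrho|^2|\nabla^2\varrho|dx$. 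Everything is absorbed into the dissipation $\norm{\nabla^3\varrho}_{L^2}^2+\norm{\nabla^2v}_{L^2}^2$, and a linear Gr\"onwall inequality for $\norm{\Delta\varrho}_{L^2}^2+\norm{\nabla v}_{L^2}^2$ closes.

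You instead treat \eqref{Equ2}$_1$ as a heat equation with a bounded divergence-form source. Maximal $L^r$ regularity then gives $\nabla\varrho\in L^{10}(Q_T)$, which lies above the scaling threshold $\frac2s+\frac3p\le1$. With $\norm{\nabla v}_{L^{5/2}}\le\norm{\nabla v}_{L^2}^{7/10}\norm{\nabla v}_{L^6}^{3/10}$, the Gr\"onwall factor becomes $\norm{\nabla\varrho}_{L^{10}}^{20/7}\in L^1(0,T)$. So the $H^1$ bound for $v$ (together with $v_t\in L^2_TL^2$) closes on its own, and the $H^2$ bound for $\varrho$ follows afterwards.

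The paper's route is elementary and self-contained. Yours decouples the two equations and shows that any $\nabla\varrho\in L^r(Q_T)$ with $r\ge5$ would suffice, at the price of importing parabolic Calder\'on--Zygmund theory.

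Two small corrections:
\begin{itemize}
\item Maximal regularity gives space--time integrability, not $\nabla\varrho\in L^\infty_TL^4$. You never use the latter, so just drop it.
\item With only $\varrho_0\in H^2$, the free part satisfies $e^{t\Delta}\nabla\varrho_0\in L^\infty_TH^1\cap L^2_TH^2\subset L^{10}(Q_T)$, and this is exactly borderline. So ``every $r<\infty$'' is overstated, although $r=10$ is precisely what you need.
\end{itemize}
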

	\begin{proof}
		Since $\varrho>0$, we can rewrite system \(\eqref{Equ2}\) as:
			\begin{equation}
			\label{Equ3}
			\left\{
			\begin{array}{l}
				\varrho_t+\div(\varrho v)-\Delta \varrho=0,\\
				v_t+(v-2\nabla\log\varrho)\cdot\nabla v-\Delta v+\frac{\gamma}{\gamma-1}\nabla\varrho^{\gamma-1}=0.
			\end{array}
			\right.
		\end{equation}
		Multiplying \(\eqref{Equ3}_1\) by \(\Delta \varrho_t\), integrating the resulting equation over \(\mathbb{T}^3\), and using integration by parts as well as Young’s inequality, we obtain
			\begin{align}\label{hh1}
			\begin{split}
				&\quad\frac{1}{2}\frac{d}{dt}\int |\Delta \varrho|^2dx+\int |\nabla\varrho_t|^2dx\\
				&\leq \int |\nabla\varrho_t||\nabla\divg(\varrho v)|dx\\
				&\leq \frac{1}{4}\int |\nabla\varrho_t|^2dx+C\int |\nabla\divg(\varrho v)|^2dx.
			\end{split}
			\end{align}
		Applying \(\nabla\) to \(\eqref{Equ3}_1\) yields
			\begin{align}\label{hh2}
			\begin{split}
				\frac{1}{8}\int |\nabla\Delta \varrho|^2dx\leq \frac{1}{4}\int |\nabla\varrho_t|^2dx+\frac{1}{4}\int |\nabla\divg(\varrho v)|^2dx.
			\end{split}
		\end{align}
		Adding \(\eqref{hh1}\) and \(\eqref{hh2}\) gives
			\begin{align}\label{hh3}
			\begin{split}
				&\quad\frac{1}{2}\frac{d}{dt}\int |\Delta \varrho|^2dx+\frac{1}{2}\int |\nabla\varrho_t|^2dx+\frac{1}{8}\int |\nabla\Delta \varrho|^2dx\\
				&\leq C\int |\nabla\divg(\varrho v)|^2dx\\
				&\leq C\int |\nabla^2\varrho|^2|v|^2dx+C\int|\nabla\varrho|^2|\nabla v|^2dx+C\int \varrho^2|\nabla^2 v|^2dx\\
				&\leq C\int |\Delta\varrho|^2dx+C\int|\nabla\varrho|^2|\nabla v|^2dx+C_8\int|\Delta v|^2dx,
			\end{split}
			\end{align}
		where the last inequality follows from \(\eqref{Cv}\), \(\eqref{3d RT}\), and \(\eqref{3d A_T}\).
		
		Next, we multiply \(\eqref{Equ3}_2\) by \(\Delta v\), integrate the resulting equation over \(\mathbb{T}^3\), and apply integration by parts together with \(\eqref{Cv}\), \(\eqref{3d RT}\), \(\eqref{3d A_T}\), and \(\eqref{ene est}\) to obtain
		\begin{align}\label{hh4}
			\begin{split}
				&\quad\frac{1}{2}\frac{d}{dt}\int |\nabla v|^2dx+\int |\Delta v|^2dx\\
				&\leq C\int |\Delta v|\Big(|v||\nabla v|+|\nabla\log\varrho||\nabla v|+|\nabla\varrho^{\gamma-1}|\Big)dx\\
				&\leq \frac{1}{2}\int |\Delta v|^2dx+C\int |v|^2|\nabla v|^2dx+C\int |\nabla\varrho|^2|\nabla v|^2dx+C\int |\nabla\varrho|^2dx\\
				&\leq \frac{1}{2}\int |\Delta v|^2dx+C\int |\nabla v|^2dx+C\int |\nabla\varrho|^2|\nabla v|^2dx+C.
			\end{split}
		\end{align}
		Multiplying \(\eqref{hh4}\) by \(4C_8\) and adding the resulting equation to \(\eqref{hh3}\), we obtain
		\begin{align}\label{hh4.5}
			\begin{split}
				&\quad\frac{1}{2}\frac{d}{dt}\int |\Delta \varrho|^2dx+2C_8\frac{d}{dt}\int |\nabla v|^2dx+\frac{1}{2}\int |\nabla\varrho_t|^2dx+\frac{1}{8}\int |\nabla\Delta \varrho|^2dx+C_8\int |\Delta v|^2dx\\
				&\leq C\int |\Delta\varrho|^2dx+C\int|\nabla\varrho|^2|\nabla v|^2dx+C\int |\nabla v|^2dx+C.
			\end{split}
		\end{align}
		
		Finally, we need to estimate \(C\int |\nabla \varrho|^2 |\nabla v|^2 \, dx\). Integration by parts together with Young's inequality yields
			\begin{align}\label{hh5}
			\begin{split}
				C\int |\nabla\varrho|^2|\nabla v|^2dx&=C\int \partial_i\varrho\partial_i\varrho\partial_j v_k\partial_j v_kdx\\
				&=-2C\int \partial_{ij}\varrho\partial_i\varrho v_k\partial_j v_kdx-C\int\partial_i\varrho\partial_i\varrho v_k\partial_{jj}v_kdx\\
				&\leq C\int |\nabla^2\varrho||\nabla\varrho||v||\nabla v|dx+C\int |\nabla\varrho|^2|v||\nabla^2 v|dx.
			\end{split}
		\end{align}
		Using \(\eqref{Cv}\),\(\eqref{3d RT}\), \(\eqref{3d A_T}\), and \(\eqref{ene est}\), we obtain
		\begin{align}\label{hh6}
			\begin{split}
				C\int |\nabla^2\varrho||\nabla\varrho||v||\nabla v|dx&\leq C\norm{\nabla^2\varrho}_{L^6}\norm{\nabla\varrho}_{L^2}\norm{v}_{L^\infty}\norm{\nabla v}_{L^3}\\
				&\leq \frac{1}{32}\norm{\nabla^2\varrho}_{H^1}^2+C\norm{\nabla v}_{L^3}^2\\
				&\leq \frac{1}{32}\norm{\nabla^2\varrho}_{H^1}^2+C\norm{\nabla v}_{L^2}\norm{\nabla v}_{L^6}\\
				&\leq \frac{1}{32}\norm{\nabla^2\varrho}_{H^1}^2+\frac{C_8}{4}\norm{\nabla v}_{H^1}^2+C\norm{\nabla v}_{L^2}^2.
			\end{split}
		\end{align}
		Similarly,
		\begin{align}\label{hh7}
			\begin{split}
				C\int |\nabla\varrho|^2|v||\nabla^2 v|dx&\leq C\norm{\nabla\varrho}_{L^4}^2\norm{v}_{L^\infty}\norm{\nabla^2 v}_{L^2}\\
				&\leq \frac{C_8}{4}\norm{\nabla^2 v}_{L^2}^2+C\norm{\nabla\varrho}_{L^4}^4\\
				&\leq\frac{C_8}{4}\norm{\nabla^2 v}_{L^2}^2+C\int \varrho|\nabla\varrho|^2|\nabla^2 \varrho| dx\\
				&\leq \frac{C_8}{4}\norm{\nabla^2 v}_{L^2}^2+C\norm{\varrho}_{L^\infty}\norm{\nabla\varrho}_{L^{\frac{12}{5}}}^2\norm{\nabla^2\varrho}_{L^6}\\
				&\leq  \frac{C_8}{4}\norm{\nabla^2v}_{L^2}^2+\frac{1}{32}\norm{\nabla^2 \varrho}_{H^1}^2+C\norm{\nabla\varrho}_{L^{\frac{12}{5}}}^4\\
				&\leq  \frac{C_8}{4}\norm{\nabla^2v}_{L^2}^2+\frac{1}{32}\norm{\nabla^2 \varrho}_{H^1}^2+C\norm{\nabla\varrho}_{L^{2}}^3\norm{\nabla\varrho}_{L^6}\\
				&\leq  \frac{C_8}{4}\norm{\nabla^2v}_{L^2}^2+\frac{1}{32}\norm{\nabla^2 \varrho}_{H^1}^2+C\norm{\nabla\varrho}_{H^1}\\
				&\leq  \frac{C_8}{4}\norm{\nabla^2v}_{L^2}^2+\frac{1}{32}\norm{\nabla^2 \varrho}_{H^1}^2+C\norm{\Delta \varrho}_{L^2}^2+C,
			\end{split}
		\end{align}
        where in the third inequality we have used integration by parts. Substituting \(\eqref{hh5}\)–\(\eqref{hh7}\) into \(\eqref{hh4.5}\) yields
			\begin{align*}
			\begin{split}
				&\quad\frac{1}{2}\frac{d}{dt}\int |\Delta \varrho|^2dx+2C_8\frac{d}{dt}\int |\nabla v|^2dx+\frac{1}{2}\int |\nabla\varrho_t|^2dx+\frac{1}{16}\int |\nabla\Delta \varrho|^2dx+\frac{C_8}{2}\int |\Delta v|^2dx\\
				&\leq C\int |\Delta\varrho|^2dx+C\int |\nabla v|^2dx+C.
			\end{split}
		\end{align*}
		 Therefore, applying Gr\"onwall’s inequality together with \(\eqref{ene est}\), \(\eqref{ent est}\), $\eqref{3d RT}$, and \(\eqref{3d A_T}\), we obtain
		 	\begin{align}\label{hh8}
		 	\sup_{0\leq t\leq T}\Big(\norm{\varrho}_{H^2}+\norm{v}_{H^1}\Big)+\int_0^T\Big(\norm{\varrho}_{H^3}^2+\norm{\varrho_t}_{H^1}^2+\norm{v}_{H^2}^2\Big)dt\leq C.
			 \end{align}
		 Moreover, from \(\eqref{Equ3}_1,\eqref{hh8},\) $\eqref{Cv}$, and $\eqref{3d A_T}$ we have
		 \begin{align}\label{hh9}
		 	\begin{split}
		 		\norm{\varrho_t}_{L^2}&\leq C\norm{\Delta\varrho}_{L^2}+C\norm{\nabla(\varrho v)}_{L^2}\\
		 		&\leq C\norm{\Delta\varrho}_{L^2}+C\norm{\varrho}_{L^\infty}\norm{\nabla v}_{L^2}+C\norm{\nabla\varrho}_{L^2}\norm{v}_{L^\infty}\\&\leq C.
		 	\end{split}
		 \end{align}
		 Similarly, 
		 \begin{align}\label{hh10}
		 	\begin{split}
		 		\norm{v_t}_{L^2}&\leq C\norm{|v||\nabla v|}_{L^2}+C\norm{|\nabla\log\varrho| |\nabla v|}_{L^2}+C\norm{\Delta v}_{L^2}+C\norm{\nabla\varrho^{\gamma-1}}_{L^2}\\
		 		&\leq C\norm{v}_{L^\infty}\norm{\nabla v}_{L^2}+C\norm{\nabla\varrho}_{L^6}\norm{\nabla v}_{L^3}+C\norm{\Delta v}_{L^2}+C\norm{\nabla \varrho}_{L^2}\\
		 		&\leq C+C\norm{\Delta v}_{L^2}.
		 	\end{split}
		 \end{align}
		Combining \(\eqref{hh8}\)–\(\eqref{hh10}\) proves \(\eqref{h1}\). This completes the proof of Proposition \ref{Prop H1}.
	\end{proof}
	
	\begin{prop}\label{Prop H2}
		There exists a constant \(C>0\) depending on \(T^*,\gamma \),\(\norm{\tau_0}_{L^\infty},\norm{\varrho_0}_{H^2}\) and \(\norm{v_0}_{H^2}\) such that 
		\begin{align}\label{h2}
			\begin{split}
				\sup_{0\leq t\leq T}\Big(\norm{v}_{H^2}+\norm{v_t}_{L^2}\Big)
				+\int_0^T\Big(\norm{v}_{H^3}^2+\norm{v_t}_{H^1}^2\Big)dt\leq C.
			\end{split}
		\end{align}
	\end{prop}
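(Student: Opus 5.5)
The plan is the standard next-level energy estimate for the parabolic equation \eqref{Equ3}$_2$, using only bounds already in hand. These are \eqref{3d RT}, \eqref{3d A_T}, \eqref{Cv} (now a genuine constant bound on $\norm{v}_{L^\infty(Q_T)}$) and Proposition \ref{Prop H1}. The last gives $\varrho\in L^\infty_TH^2\cap L^2_TH^3$, $\varrho_t\in L^2_TH^1$ and $v\in L^\infty_TH^1\cap L^2_TH^2$.

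\textbf{Step 1: the time-derivative estimate.} Differentiate \eqref{Equ3}$_2$ in time:
\begin{align*}
v_{tt}-\Delta v_t=-(v-2\nabla\log\varrho)\cdot\nabla v_t-\big(v_t-2\nabla(\varrho^{-1}\varrho_t)\big)\cdot\nabla v-\tfrac{\gamma}{\gamma-1}\nabla(\varrho^{\gamma-1})_t .
\end{align*}
Multiply by $v_t$ and integrate. The aim is
\begin{align*}
\tfrac12\tfrac{d}{dt}\norm{v_t}_{L^2}^2+\norm{\nabla v_t}_{L^2}^2\le \tfrac12\norm{\nabla v_t}_{L^2}^2+C\,\phi(t)\big(\norm{v_t}_{L^2}^2+1\big),
\qquad \phi\in L^1(0,T).
\end{align*}
The terms are handled as follows.
\begin{itemize}
\item $|v||\nabla v_t||v_t|$: bounded using $\norm{v}_{L^\infty}$.
\item $|\nabla\log\varrho||\nabla v_t||v_t|$: bounded by $\norm{\nabla\varrho}_{L^6}\norm{v_t}_{L^3}\norm{\nabla v_t}_{L^2}$. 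Interpolating $L^3$ between $L^2$ and $H^1$ reduces it to $\varepsilon\norm{\nabla v_t}^2+C\norm{v_t}_{L^2}^2$, since $\norm{\varrho}_{H^2}\le C$.
\item $|v_t|^2|\nabla v|$: bounded by $\norm{\nabla v}_{L^2}\norm{v_t}_{L^4}^2$, then Gagliardo–Nirenberg.
\item Pressure term: integrate by parts to get $\int(\varrho^{\gamma-1})_t\,\divg v_t$, which is $\le C\norm{\varrho_t}_{L^2}\norm{\nabla v_t}_{L^2}$.
\item $\nabla(\varrho^{-1}\varrho_t)\cdot\nabla v\, v_t$: this is the main obstacle, because it contains $\nabla\varrho_t$, which is only $L^2_TL^2$. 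Integrate by parts to move the derivative off $\varrho_t$. This produces $\int\varrho^{-1}\varrho_t(\nabla^2 v\, v_t+\nabla v\,\nabla v_t)$. Bound it by $\norm{\varrho_t}_{L^3}\big(\norm{\nabla^2v}_{L^2}\norm{v_t}_{L^6}+\norm{\nabla v}_{L^6}\norm{\nabla v_t}_{L^2}\big)$, which is at most $\varepsilon\norm{\nabla v_t}^2+C\norm{\varrho_t}_{H^1}^2\norm{v}_{H^2}^2$ (here $\norm{v_t}_{L^6}\le C\norm{v_t}_{H^1}$). The factor $\norm{\varrho_t}_{H^1}^2\norm{v}_{H^2}^2$ is not obviously integrable, since both are only $L^2_T$. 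To close, I will rather use $\norm{v}_{H^2}\le C(\norm{v_t}_{L^2}+1)$ from \eqref{hh10}/elliptic regularity below. This gives a term $C\norm{\varrho_t}_{H^1}^2(\norm{v_t}_{L^2}^2+1)$ with $\phi=\norm{\varrho_t}_{H^1}^2\in L^1$, which is Grönwall-admissible.
\end{itemize}
Initially $v_t(0)$ is expressed through \eqref{Equ3}$_2$, and $\norm{v_t(0)}_{L^2}\le C(\norm{v_0}_{H^2},\norm{\varrho_0}_{H^2},\norm{\tau_0}_{L^\infty})$. Grönwall then gives $\sup_t\norm{v_t}_{L^2}+\int_0^T\norm{\nabla v_t}_{L^2}^2\le C$.

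\textbf{Step 2: elliptic recovery.} Regard \eqref{Equ3}$_2$ as $-\Delta v=F:=-v_t-(v-2\nabla\log\varrho)\cdot\nabla v-\frac{\gamma}{\gamma-1}\nabla\varrho^{\gamma-1}$ on $\mathbb{T}^3$. Then
\begin{align*}
\norm{\nabla^2v}_{L^2}\le \norm{v_t}_{L^2}+C\norm{\nabla v}_{L^2}+C\norm{\nabla\varrho}_{L^6}\norm{\nabla v}_{L^3}+C,
\end{align*}
and absorbing $\norm{\nabla v}_{L^3}\le \varepsilon\norm{\nabla^2 v}+C$ gives $\sup_t\norm{v}_{H^2}\le C$. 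This is consistent with the closure used in Step 1; I would combine the two rigorously by inserting the elliptic bound inside the Grönwall argument. Next, $\norm{\nabla^3 v}_{L^2}\le C\norm{\nabla F}_{L^2}$. Here $\nabla F$ involves $\nabla v_t$, $|\nabla v|^2$, $v\nabla^2v$, $\nabla^2\log\varrho\,\nabla v$, $\nabla\log\varrho\,\nabla^2 v$ and $\nabla^2\varrho^{\gamma-1}$. Each lies in $L^2_TL^2$ using $\norm{\varrho}_{L^2_TH^3}$, $\norm{v}_{L^\infty_TH^2}$, the $H^2\subset L^\infty$ embedding and the density bounds. For instance, $\norm{\nabla^2\varrho\,\nabla v}_{L^2}\le\norm{\nabla^2\varrho}_{L^3}\norm{\nabla v}_{L^6}$. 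Together with $\int_0^T\norm{v_t}_{H^1}^2\le C$ from Step 1, this gives \eqref{h2}.
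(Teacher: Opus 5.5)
Your proof is correct. It shares the paper's skeleton: differentiate $\eqref{Equ3}_2$ in $t$, test with $v_t$, integrate by parts the term containing $\nabla(\varrho^{-1}\varrho_t)$, apply Gr\"onwall, then recover regularity elliptically. Where you differ is in how the higher-order terms are closed.

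\textbf{How the paper closes.} It adds to the $v_t$-energy inequality the bound $\frac{1}{12}\norm{\nabla\Delta v}_{L^2}^2\le\frac14\norm{\nabla v_t}_{L^2}^2+\dots$, obtained by applying $\nabla$ to $\eqref{Equ3}_2$. So the $L^2_TH^3$ dissipation sits on the left-hand side from the start. This lets the paper keep $\varrho_t$ only in $L^\infty_TL^2$, paired with $\norm{\nabla v}_{L^\infty}$ and $\norm{\Delta v}_{L^3}$. Those two norms are interpolated and absorbed into $\norm{\nabla v}_{H^2}^2$. The Gr\"onwall weight is $1+\norm{\nabla\varrho}_{H^2}^2$, coming from $\norm{\nabla\varrho}_{L^\infty}^2\norm{v_t}_{L^2}^2$. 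The pointwise bound on $\norm{\nabla^2v}_{L^2}$ is derived only at the very end.

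\textbf{How you close.} You decouple the two levels. You put $\varrho_t$ in $L^3$, controlled by its $H^1$ norm. You then insert the pointwise elliptic bound $\norm{v}_{H^2}\le C(1+\norm{v_t}_{L^2})$ inside the Gr\"onwall argument. This is legitimate and not circular, because that bound uses only the $\sup_t$ quantities from \eqref{h1}, \eqref{Cv} and the two density bounds. You close with the integrable weight $\norm{\varrho_t}_{H^1}^2\in L^1(0,T)$. Afterwards you recover $L^2_TH^3$ from $\nabla\Delta v=-\nabla F$.

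\textbf{Comparison.} Your route avoids the $\norm{\nabla v}_{L^\infty}$ and $\norm{\Delta v}_{L^3}$ interpolations and is more modular. It also handles $\norm{v_t(0)}_{L^2}$ explicitly, which the paper passes over. The paper's route obtains the $H^3$ bound within the same Gr\"onwall step.

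\textbf{One small correction.} \eqref{hh10} gives the reverse inequality, $\norm{v_t}_{L^2}\le C+C\norm{\Delta v}_{L^2}$. The bound you actually need comes from your Step 2, not from \eqref{hh10}.
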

	\begin{proof}
		Differentiating \(\eqref{Equ3}_2\) with respect to \(t\) gives
		\begin{align*}
			v_{tt}+(v-2\nabla\log\varrho)_t\cdot\nabla v+(v-2\nabla\log\varrho)\cdot \nabla v_t-\Delta v_t+\frac{\gamma}{\gamma-1}(\nabla \varrho^{\gamma-1})_t=0.
		\end{align*}
		Multiplying the above equation by \(v_t\), integrating the resulting equation over \(\mathbb{T}^3\), and using integration by parts as well as Young’s inequality, we obtain
		\begin{align}\label{hh16}
			\begin{split}
				&\quad\frac{1}{2}\frac{d}{dt}\int |v_t|^2dx+\int |\nabla v_t|^2dx\\
				&=-\int (v-2\nabla\log\varrho)_t\cdot\nabla v\cdot v_tdx-\int (v-2\nabla\log\varrho)\cdot \nabla v_t\cdot v_tdx\\
				&\quad-\frac{\gamma}{\gamma-1}\int \nabla(\varrho^{\gamma-1})_t\cdot v_tdx\\
				&=\int \div v_tv\cdot v_tdx+\int v_t\cdot\nabla v_t\cdot vdx-2\int(\log\varrho)_t\Delta v\cdot v_tdx-2\int (\log\varrho)_t\nabla v:\nabla v_tdx\\
				&\quad-\int (v-2\nabla\log\varrho)\cdot \nabla v_t\cdot v_tdx+\frac{\gamma}{\gamma-1}\int (\varrho^{\gamma-1})_t\divg v_tdx\\
				&\leq \frac{1}{4}\int |\nabla v_t|^2dx+C\int \Big(|v|^2|v_t|^2+|\varrho_t|^2|\nabla v|^2+|\nabla\varrho|^2|v_t|^2+|\varrho_t|^2\Big)dx+C\int |\varrho_t||\Delta v||v_t|dx.
			\end{split}
		\end{align}
		It follows from $\eqref{Equ3}_2$ that
		\begin{align}\label{hh17}
			\begin{split}
				\quad\frac{1}{12}\int |\nabla\Delta v|^2dx&=\frac{1}{12}\int |\nabla(v_t+(v-2\nabla\log\varrho)\cdot\nabla v+\frac{\gamma}{\gamma-1}\nabla\varrho^{\gamma-1})|^2dx\\
				&\leq \frac{1}{4}\int |\nabla v_t|^2dx+C\int \Big(|\nabla v|^4+|v|^2|\nabla^2 v|^2+|\nabla\varrho|^4|\nabla v|^2\\
				&\quad+|\nabla^2\varrho|^2|\nabla v|^2+|\nabla\varrho|^2|\nabla^2 v|^2+|\nabla\varrho|^4+|\nabla^2\varrho|^2\Big)dx.
			\end{split}
		\end{align}
		Adding \(\eqref{hh16}\) and \(\eqref{hh17}\), we obtain from $\eqref{h1}$, $\eqref{Cv}$, and \eqref{3d A_T} that
		\begin{align}
			\begin{split}
				&\quad\frac{1}{2}\frac{d}{dt}\int |v_t|^2dx+\frac{1}{2}\int |\nabla v_t|^2dx+\frac{1}{12}\int |\nabla\Delta v|^2dx\\
				&\leq C\int \Big(|v|^2|v_t|^2+|\varrho_t|^2|\nabla v|^2+|\nabla\varrho|^2|v_t|^2+|\varrho_t|^2\Big)dx+C\int |\varrho_t||\Delta v||v_t|dx\\
				&\quad +C\int \Big(|\nabla v|^4+|v|^2|\nabla^2 v|^2+|\nabla\varrho|^4|\nabla v|^2+|\nabla^2\varrho|^2|\nabla v|^2+|\nabla\varrho|^2|\nabla^2 v|^2+|\nabla\varrho|^4+|\nabla^2\varrho|^2\Big)dx\\
				&\leq C\Big(\norm{v}_{L^\infty}^2\norm{v_t}_{L^2}^2+\norm{\varrho_t}_{L^2}^2\norm{\nabla v}_{L^\infty}^2+\norm{\nabla\varrho}_{L^\infty}^2\norm{v_t}_{L^2}^2+\norm{\varrho_t}_{L^2}^2\Big)+C\norm{\varrho_t}_{L^2}\norm{\Delta v}_{L^3}\norm{v_t}_{L^6}\\
				&\quad+C\Big(\norm{v}_{L^\infty}\norm{\nabla v}_{L^{\frac{12}{5}}}^2\norm{\nabla^2 v}_{L^6}+\norm{v}_{L^\infty}^2\norm{\nabla^2 v}_{L^2}^2+\norm{\nabla\varrho}_{L^4}^4\norm{\nabla v}_{L^\infty}^2+\norm{\nabla^2\varrho}_{L^2}^2\norm{\nabla v}_{L^\infty}^2\\
				&\quad+\norm{\nabla \varrho}_{L^6}^2\norm{\nabla^2 v}_{L^3}^2+\norm{\nabla\varrho}_{L^4}^4+\norm{\nabla^2\varrho}_{L^2}^2\Big)\\
				&\leq C\Big(\norm{v_t}_{L^2}^2+\norm{\nabla v}_{L^2}^{\frac{1}{2}}\norm{\nabla v}_{W^{1,6}}^{\frac{3}{2}}+\norm{\nabla\varrho}_{H^2}^2\norm{v_t}_{L^2}^2+1\Big)+C\norm{\nabla^2 v}_{L^2}^{\frac{1}{2}}\norm{\nabla^2 v}_{L^6}^{\frac{1}{2}}\norm{v_t}_{H^1}\\
				&\quad+C\Big(\norm{\nabla v}_{L^2}^{\frac{3}{2}}\norm{\nabla v}_{L^6}^{\frac{1}{2}}\norm{\nabla^2 v}_{L^6}+\norm{\nabla^2 v}_{L^2}^2+\norm{\nabla v}_{L^2}^{\frac{1}{2}}\norm{\nabla v}_{W^{1,6}}^{\frac{3}{2}}+\norm{\nabla^2 v}_{L^2}\norm{\nabla^2 v}_{L^6}+1\Big)\\
				&\leq C\Big(\norm{v_t}_{L^2}^2+\norm{\nabla v}_{L^2}^{\frac{1}{2}}\norm{\nabla v}_{H^2}^{\frac{3}{2}}+\norm{\nabla\varrho}_{H^2}^2\norm{v_t}_{L^2}^2+1\Big)+C\norm{\nabla^2 v}_{L^2}^{\frac{1}{2}}\norm{\nabla^2 v}_{H^1}^{\frac{1}{2}}\norm{v_t}_{H^1}\\
				&\quad+C\Big(\norm{\nabla v}_{L^2}^{\frac{3}{2}}\norm{\nabla v}_{H^1}^{\frac{1}{2}}\norm{\nabla^2 v}_{H^1}+\norm{\nabla^2 v}_{L^2}^2+\norm{\nabla v}_{L^2}^{\frac{1}{2}}\norm{\nabla v}_{H^2}^{\frac{3}{2}}+\norm{\nabla^2 v}_{L^2}\norm{\nabla^2 v}_{H^1}+1\Big)\\
				&\leq \frac{1}{4}\norm{v_t}_{H^1}^2+\frac{1}{24}\norm{\nabla v}_{H^2}^2+C\Big(1+\norm{\nabla\varrho}_{H^2}^2\Big)\norm{v_t}_{L^2}^2+C\Big(1+\norm{\nabla v}_{L^2}^2+\norm{\nabla^2 v}_{L^2}^2\Big),
			\end{split}
		\end{align}
		where in the second inequality we have used integration by parts. Consequently,
		\begin{align}
			\begin{split}
				&\quad\frac{1}{2}\frac{d}{dt}\int |v_t|^2dx+\frac{1}{4}\int |\nabla v_t|^2dx+\frac{1}{24}\int |\nabla\Delta v|^2dx\\
				&\leq C\Big(1+\norm{\nabla\varrho}_{H^2}^2\Big)\norm{v_t}_{L^2}^2+C\Big(1+\norm{\nabla v}_{L^2}^2+\norm{\nabla^2 v}_{L^2}^2\Big).
			\end{split}
		\end{align}
		Applying Gr\"onwall's inequality together with \(\eqref{h1}\) yields
		\begin{align}\label{hh17.5}
			\begin{split}
				\sup_{0\leq t\leq T}\norm{v_t}_{L^2}+\int_0^T\Big(\norm{v_t}_{H^1}^2+\norm{v}_{H^3}^2\Big)dt\leq C.
			\end{split}
		\end{align}
		Moreover, from \(\eqref{Equ3}_2,\eqref{Cv},\eqref{3d A_T},\eqref{h1}\), and $\eqref{hh17.5}$ we know that
		\begin{align}
			\begin{split}
				\norm{\nabla^2 v}_{L^2}&\leq C\Big(\norm{v_t}_{L^2}+\norm{v}_{L^\infty}\norm{\nabla v}_{L^2}+\norm{\nabla\log\varrho}_{L^6}\norm{\nabla v}_{L^3}+\norm{\nabla\varrho}_{L^2}\Big)\\
				&\leq C+C\norm{\nabla v}_{L^2}^{\frac{1}{2}}\norm{\nabla v}_{H^1}^{\frac{1}{2}}\\
				&\leq \frac{1}{2}\norm{\nabla^2 v}_{L^2}+C,
			\end{split}
		\end{align}
		which implies that
		\begin{align}\label{hh17.6}
			\sup_{0\leq t\leq T}\norm{\nabla^2v}_{L^2}\leq C.
		\end{align}
		Combining \eqref{h1}, \eqref{hh17.5}, and \eqref{hh17.6} completes the proof of Proposition \ref{Prop H2}.
	\end{proof}
	
	\begin{prop}\label{Prop H3}
		There exists a constant \(C>0\) depending on \(T^*,\gamma \),\(\norm{\tau_0}_{L^\infty},\norm{\varrho_0}_{H^3}\) and \(\norm{v_0}_{H^2}\) such that 
		\begin{align}\label{h3}
			\begin{split}
				\sup_{0\leq t\leq T}\Big(\norm{\varrho}_{H^3}+\norm{\varrho_t}_{H^1}\Big)
				+\int_0^T\Big(\norm{\varrho}_{H^4}^2+\norm{\varrho_t}_{H^2}^2+\norm{\varrho_{tt}}_{L^2}^2\Big)dt\leq C.
			\end{split}
		\end{align}
	\end{prop}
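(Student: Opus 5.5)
We obtain the third-order bounds for the density from the linear heat structure of $\eqref{Equ3}_1$, treating $\divg(\varrho v)$ as a source term. Propositions \ref{Prop H1} and \ref{Prop H2} already control this source in the needed norms. Throughout we use \eqref{3d RT}, \eqref{3d A_T}, \eqref{Cv}, \eqref{h1} and \eqref{h2}. In particular $\sup_t\norm{v}_{H^2}\le C$, so $v\in L^\infty(Q_T)$ and $\nabla v\in L^\infty_TL^6$; moreover $\sup_t\norm{\varrho}_{H^2}\le C$, $\int_0^T\norm{v}_{H^3}^2dt\le C$ and $\int_0^T\norm{v_t}_{H^1}^2dt\le C$.

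\textbf{Step 1: the energy estimate for $\varrho_t$ in $H^1$.} Differentiate $\eqref{Equ3}_1$ in time:
\[
\varrho_{tt}-\Delta\varrho_t=-\divg(\varrho_t v+\varrho v_t).
\]
Multiply by $-\Delta\varrho_t$ and integrate; this gives
\[
\frac12\frac{d}{dt}\norm{\nabla\varrho_t}_{L^2}^2+\norm{\Delta\varrho_t}_{L^2}^2\le \frac12\norm{\Delta\varrho_t}_{L^2}^2+C\norm{\nabla(\varrho_t v+\varrho v_t)}_{L^2}^2 .
\]
We expand the last term as
\[
|\nabla\varrho_t||v|+|\varrho_t||\nabla v|+|\nabla\varrho||v_t|+\varrho|\nabla v_t|
\]
and bound each piece:
\begin{itemize}
\item $\norm{v}_{L^\infty}\norm{\nabla\varrho_t}_{L^2}$;
\item $\norm{\varrho_t}_{L^3}\norm{\nabla v}_{L^6}\le C\norm{\varrho_t}_{H^1}$;
\item $\norm{\nabla\varrho}_{L^3}\norm{v_t}_{L^6}\le C\norm{v_t}_{H^1}$, using $\norm{\nabla\varrho}_{H^1}\le C$;
\item $C\norm{\nabla v_t}_{L^2}$.
\end{itemize}
Hence the right-hand side is at most $C\norm{\nabla\varrho_t}_{L^2}^2+C\norm{v_t}_{H^1}^2+C$. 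Here $\norm{\varrho_t}_{L^2}\le C$ by \eqref{h1}, and $\norm{v_t}_{H^1}^2$ is integrable in time by \eqref{h2}. Gr\"onwall's inequality then gives
\[
\sup_t\norm{\varrho_t}_{H^1}+\int_0^T\norm{\varrho_t}_{H^2}^2dt\le C .
\]
The initial value $\norm{\nabla\varrho_t(0)}_{L^2}$ is finite: reading it off from the equation, it is bounded by $\norm{\varrho_0}_{H^3}$ and $\norm{v_0}_{H^2}$.

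\textbf{Step 2: elliptic recovery of $H^3$ and $H^4$.} From $\Delta\varrho=\varrho_t+\divg(\varrho v)$ we get
\[
\norm{\nabla^3\varrho}_{L^2}\le \norm{\nabla\varrho_t}_{L^2}+\norm{\nabla^2(\varrho v)}_{L^2}.
\]
The second term involves products like $|\nabla^2\varrho||v|$ and $|\nabla\varrho||\nabla v|$, and $\varrho\nabla^2v$. These are bounded by $\sup_t(\norm{\varrho}_{H^2}+\norm{v}_{H^2})$, using $H^2\subset L^\infty$ and $L^4$ products. This yields $\sup_t\norm{\varrho}_{H^3}\le C$.

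Similarly,
\[
\norm{\nabla^4\varrho}_{L^2}\le\norm{\nabla^2\varrho_t}_{L^2}+\norm{\nabla^3(\varrho v)}_{L^2}.
\]
The worst pieces are $\varrho\nabla^3v$, bounded by $\norm{v}_{H^3}$, and $\nabla^3\varrho\, v$, bounded by $\norm{\varrho}_{H^3}\norm{v}_{L^\infty}$. The mixed terms $\nabla^2\varrho\nabla v$ and $\nabla\varrho\nabla^2 v$ are handled with $L^3\times L^6$ or $L^\infty\times L^2$ pairings, which are controlled by the $H^3$ bound just obtained. Squaring and integrating in time with \eqref{h2} gives $\int_0^T\norm{\varrho}_{H^4}^2dt\le C$.

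\textbf{Step 3: the bound on $\varrho_{tt}$.} From the differentiated equation,
\[
\norm{\varrho_{tt}}_{L^2}\le\norm{\Delta\varrho_t}_{L^2}+\norm{\divg(\varrho_tv+\varrho v_t)}_{L^2}.
\]
This is square-integrable in time by Steps 1 and 2.

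The main obstacle is in Step 1, specifically the coupling term $\divg(\varrho v_t)$. It requires $v_t\in L^2_TH^1$, which is exactly what Proposition \ref{Prop H2} provides. Without it, one would have to trade $\nabla v_t$ through the $v$-equation for $\nabla^3 v$. Some care is also needed with the initial compatibility data, namely that $\varrho_t(0)\in H^1$ follows from $\varrho_0\in H^3$ and $v_0\in H^2$.
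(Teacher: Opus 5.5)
Your proof is correct and follows essentially the paper's route. You differentiate $\eqref{Equ3}_1$ in time, run an energy estimate for $\nabla\varrho_t$ with the source $(\divg(\varrho v))_t$ controlled by Propositions \ref{Prop H1} and \ref{Prop H2}, and then recover $\nabla^3\varrho$, $\nabla^4\varrho$ and $\varrho_{tt}$ from the equation. The only difference is cosmetic: the paper tests with $\varrho_{tt}$ and reads off $\Delta\varrho_t$ and $\Delta\Delta\varrho$ inside the same inequality, whereas you test with $-\Delta\varrho_t$ and obtain $\varrho_{tt}$ and the $H^4$ bound afterwards by elliptic recovery.
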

	\begin{proof}
		Differentiating equation \(\eqref{Equ3}_1\) with respect to $t$ gives:
		\begin{align}\label{hh10.5}
			\varrho_{tt}+(\div (\varrho v))_t-\Delta\varrho_t=0.
		\end{align}
		Multiplying the above equation by \(\varrho_{tt}\), integrating the resulting equation over \(\mathbb{T}^3\), and applying integration by parts as well as Young’s inequality yields
		\begin{align}\label{hh11}
			\begin{split}
				&\quad\frac{1}{2}\frac{d}{dt}\int |\nabla\varrho_t|^2dx+\int |\varrho_{tt}|^2dx\\
				&\leq \int |\varrho_{tt}||(\divg(\varrho v))_t|dx\\
				&\leq \frac{1}{4}\int |\varrho_{tt}|^2dx+C\int|(\divg(\varrho v))_t|^2dx.
			\end{split}
		\end{align}
		It follows from $\eqref{hh10.5}$ that
		\begin{align}\label{hh12}
			\begin{split}
				\frac{1}{8}\int |\Delta\varrho_t|^2dx&\leq \frac{1}{4}\int |\varrho_{tt}|^2+\frac{1}{4}\int |(\divg(\varrho v))_t|^2dx.
			\end{split}
		\end{align}
		Applying the Laplacian \(\Delta\) to equation \(\eqref{Equ3}_1\) gives:
		\begin{align}\label{hh13}
			\Delta\varrho_t+\Delta\divg(\varrho v)-\Delta\Delta\varrho=0,
		\end{align}
		which implies that
		\begin{align}\label{hh14}
			\begin{split}
				\frac{1}{32}\int|\Delta\Delta \varrho|^2dx\leq \frac{1}{16}\int |\Delta\varrho_t|^2dx+\frac{1}{16}\int |\Delta\divg(\varrho v)|^2dx.
			\end{split}
		\end{align}
		Adding \(\eqref{hh11}\), \(\eqref{hh12}\), and \(\eqref{hh14}\) gives
		\begin{align}\label{hh15}
			\begin{split}
				&\quad\frac{1}{2}\frac{d}{dt}\int |\nabla\varrho_t|^2dx+\frac{1}{2}\int |\varrho_{tt}|^2dx+\frac{1}{16}\int |\Delta\varrho_t|^2dx+\frac{1}{32}\int |\Delta\Delta\varrho|^2dx\\
				&\leq C\int|(\divg(\varrho v))_t|^2dx+C\int |\Delta\divg(\varrho v)|^2dx\\
				&\leq C\int \Big(|\varrho_t|^2|\nabla v|^2+|\varrho|^2|\nabla v_t|^2+|\nabla\varrho_t|^2|v|^2+|\nabla\varrho|^2|v_t|^2\Big)dx\\
				&\quad+C\int \Big(|\nabla^3\varrho|^2|v|^2+|\nabla^2\varrho|^2|\nabla v|^2+|\nabla\varrho|^2|\nabla^2 v|^2+|\varrho|^2|\nabla^3 v|^2\Big)dx\\
				&\leq C\Big(\norm{\varrho_t}_{L^2}^2\norm{\nabla v}_{L^\infty}^2+\norm{\varrho}_{L^\infty}^2\norm{\nabla v_t}_{L^2}^2+\norm{\nabla\varrho_t}_{L^2}^2\norm{v}_{L^\infty}^2+\norm{\nabla\varrho}_{L^\infty}^2\norm{v_t}_{L^2}^2\Big)\\
				&\quad+C\Big(\norm{\nabla^3\varrho}_{L^2}^2\norm{v}_{L^\infty}^2+\norm{\nabla^2\varrho}_{L^2}^2\norm{\nabla v}_{L^\infty}^2+\norm{\nabla\varrho}_{L^6}^2\norm{\nabla^2v}_{L^3}^2+\norm{\varrho}_{L^\infty}^2\norm{\nabla^3 v}_{L^2}^2\Big)\\
				&\leq C\Big(\norm{\nabla v}_{H^2}^2+\norm{\nabla v_t}_{L^2}^2+\norm{\nabla\varrho_t}_{L^2}^2+\norm{\nabla\varrho}_{H^2}^2\Big)\\
				&\quad+C\Big(\norm{\nabla^3\varrho}_{L^2}^2+\norm{\nabla v}_{H^2}^2+\norm{\nabla^2 v}_{H^1}^2+\norm{\nabla^3 v}_{L^2}^2\Big)\\
				&\leq C\Big(\norm{\nabla v}_{H^2}^2+\norm{\nabla v_t}_{L^2}^2+\norm{\nabla\varrho_t}_{L^2}^2+\norm{\nabla\varrho}_{H^2}^2\Big),
			\end{split}
		\end{align}
		where we have used \eqref{h1} and $\eqref{h2}$. Integrating \eqref{hh15} with respect to time and using \eqref{h1} and $\eqref{h2}$, we obtain
		\begin{align}\label{hh19}
			\begin{split}
				\sup_{0\leq t\leq T}\norm{\nabla\varrho_t}_{L^2}+\int_0^T\Big(\norm{\varrho_{tt}}_{L^2}^2+\norm{\varrho_t}_{H^2}^2+\norm{\varrho}_{H^4}^2\Big)dt\leq C.
			\end{split}
		\end{align}
		Moreover, from \(\eqref{Equ3}_1\), $\eqref{hh19}$, $\eqref{h1}$, and $\eqref{h2}$ we have that
		\begin{align}
			\begin{split}
				\norm{\nabla\Delta\varrho}_{L^2}&\leq C\Big(\norm{\nabla\varrho_t}_{L^2}+\norm{\nabla^2\varrho}_{L^2}\norm{v}_{L^\infty}+\norm{\nabla\varrho}_{L^6}\norm{\nabla v}_{L^3}+\norm{\varrho}_{L^\infty}\norm{\nabla^2 v}_{L^2}\Big)\\
				&\leq C,
			\end{split}
		\end{align}
		which implies that
		\begin{align}\label{hh20}
			\sup_{0\leq t\leq T}\norm{\nabla^3 \varrho}_{L^2}\leq C.
		\end{align}
		Combining \eqref{h1}, \eqref{hh19}, and \eqref{hh20} completes the proof of Proposition \ref{Prop H3}.
	\end{proof}

    \section{Proof of Theorem \ref{Thm 1.1}}
    In the three-dimensional case, the global existence of strong solutions to problem \eqref{Equ1}--\eqref{ini data} follows from the a priori estimates established in the preceding sections. The proof is standard and is therefore omitted. For the uniqueness of the strong solution, we refer to \cite{Yu-Wu-2021}. 
    
    In the two-dimensional case, the global existence of strong solutions to problem \eqref{Equ1}--\eqref{ini data} can be established for any $\gamma \ge 1$. In fact, the restriction on $\gamma$ in the three-dimensional case stems primarily from the need to estimate an upper bound for the density. In 2D, the standard energy estimate \eqref{ene est} ensures that $\norm{\varrho}_{L^\infty_T L^p}$ is bounded for any $p\in[1,\infty)$. This, in turn, yields the boundedness of $\norm{\varrho^{\frac{1}{4}} v}_{L^\infty_T L^{4}}$ for every $\gamma\ge1$. With this bound at hand, the De Giorgi iteration technique can be applied to obtain an upper bound for the density. The complete implementation of this step is presented in \cite{Yu-Wu-2021}.

	\section*{Acknowledgments}
	X Huang is partially supported by Chinese Academy of Sciences Project for Young Scientists in Basic Research (Grant No. YSBR-031), National Natural Science Foundation of China (Grant Nos. 12494542, 11688101) and National Key R\&D Program of China (Grant No. 2021YFA1000801). X Zhang is supported by National Natural Science Foundation of Yulin
	University (Grant No. 2025GK11), the Young Elite Scientists Sponsorship Program by Yulin Association for Science and Technology (Grant No. 20250712).
	
	\vspace{1cm}
	\noindent\textbf{Data availability statement.} Data sharing is not applicable to this article.
	
	\vspace{0.3cm}
	\noindent\textbf{Conflict of interest.} The authors declare that they have no conflict of interest.

\end{document}